\newtheorem{theorem}{Theorem}[section]
\newtheorem{proposition}[theorem]{Proposition}
\newtheorem{lemma}[theorem]{Lemma}
\newtheorem{corollary}[theorem]{Corollary}
\theoremstyle{definition}
\newtheorem{definition}[theorem]{Definition}
\begin{document}
	\date{2022-7-28}
	\title[Wasserstein metrics for von Neumann algebras]{Quadratic Wasserstein metrics for von Neumann algebras via transport plans}
	\author{Rocco Duvenhage}
	\address{Department of Physics\\
		University of Pretoria\\
		Pretoria 0002\\
		South Africa}
	\email{rocco.duvenhage@up.ac.za}
	\subjclass[2020]{Primary: 49Q22. Secondary: 46L30, 46L55.}

\begin{abstract}
We show how one can obtain a class of quadratic Wasserstein metrics, that is
to say, Wasserstein metrics of order 2, on the set of faithful normal states
of a von Neumann algebra $A$, via transport plans, rather than through a
dynamical approach. Two key points to make this work, are a suitable
formulation of the cost of transport arising from Tomita-Takesaki theory and
relative tensor products of bimodules (or correspondences in the sense of
Connes). The triangle inequality, symmetry and $W_{2}(\mu,\mu)=0$ all work
quite generally, but to show that $W_{2}(\mu,\nu)=0$ implies $\mu=\nu$, we
need to assume that $A$ is finitely generated.

\end{abstract}

\maketitle

\section{Introduction}

In recent years, several papers have studied Wasserstein metrics of order 2, or
quadratic Wasserstein metrics, in a noncommutative context using a dynamical
approach inspired by the dynamical formulation of Benamou and Brenier
\cite{BB} in the classical case. These papers, often involving finite
dimensional algebras, include \cite{CM1, CM2, CM3, H1, H2, CGT, CGGT, W}. The
papers \cite{H1, W} investigate this approach for W*-algebras, i.e., von
Neumann algebras, motivated by, among other goals, finding a noncommutative
analogue of Ricci curvature bounds. In turn, the papers \cite{BGJ1, BGJ2} made
use of these developments to obtain logarithmic Sobolev inequalities for
quantum Markov semigroups

Classically, however, there is also another approach, using transport plans
(i.e., couplings of probability measures). This approach has also been followed
in the literature on the noncommutative case, but it typically seems to lead
to a Wasserstein ``distance'' which is not zero between a state and itself,
while the triangle equality also has to be adapted. In other words, so far in
the literature on the noncommutative transport plan approach, the Wasserstein
distance on the state space of the algebra concerned, is not actually a
metric. Quadratic Wasserstein distance functions obtained from transport plans
have nevertheless already had applications in the noncommutative case,
specifically in quantum physics, despite not being metrics. See in particular
\cite{GMP, GP1, GP2, CGP, dePT}. This motivates a further study of the
transport plan formulation, in particular finding an approach where they do
deliver metrics, which is exactly our aim in this paper.

In \cite{AF}, difficulties in the noncommutative transport plan approach were
pointed out. They obtained simple Wasserstein-like metrics for the states on
the $2\times2$ matrices using noncommutative transport plans, but by deviating
somewhat from the standard conceptual framework of optimal transport.

In the case of the space of tracial states on a C*-algebra, \cite{BV} did
obtain Wasserstein metrics (of any order) via a coupling approach, through a
somewhat different formulation of couplings (transport plans) than in this
paper. Furthermore, our setup involves bimodules for von Neumann algebras in a central way, which is again different from the approach in \cite{BV}. 
Nevertheless, there are some analogies between the two approaches, which will be pointed out where relevant.

We can also mention the well-known work of Connes \cite{Con89} on
noncommutative geometry, where a metric on the state space is obtained as a
Wasserstein distance of order 1. However, this is not based on couplings, but
rather extends the classical dual definition based on Lipschitz seminorms.

There has been at least one other approach to a quantum Wasserstein distance,
also of order 1, namely \cite{DMTL}, based on the notion of neighbouring
states, where a metric was obtained.

In this paper we develop an approach to noncommutative quadratic Wasserstein
distances using transport plans, where the distance functions on the space of
faithful normal states of a von Neumann algebra are indeed metrics. Below we
outline our approach, starting with brief remarks on the classical case.

We are interested in obtaining a non-commutative analogue of the classical
case where the cost function is taken as the usual distance squared in
$\mathbb{R}^{n}$, giving the following cost of transport from one probability
measure to another on some closed subset $X$ of $\mathbb{R}^{n}$:
\begin{equation}
I(\omega)=\int_{X\times X}\left\|  x-y\right\|  ^{2}d\omega
(x,y)\label{klasKoste}
\end{equation}
where $\omega$ is a transport plan from the one probability measure $\mu$ to
the other $\nu$, i.e. $\omega$ is a coupling of the two measures. The goal is
to find the minimum of $I(\omega)$, for a given pair $(\mu,\nu)$, the square
root of which then defines the quadratic Wasserstein distance between $\mu$
and $\nu$, giving a metric on an appropriate set of probability measures. This
is a specific but important case of the optimal transport problem. A clear
discussion of the original motivation for optimal transport, as well as its
modern implications, can be found in the books by Villani \cite{V1, V2}.

As we only consider the quadratic case in this paper, we henceforth simply
refer to Wasserstein distances (should they not yet be shown to be metrics)
and Wasserstein metrics.

Our noncommutative version will correspond to the case where $X$ is a bounded
set, ensuring that the coordinate functions $x\mapsto x_{l}$, in terms of
$x=(x_{1},...,x_{n})$, are bounded. We replace these coordinate functions by
elements $k_{1},...,k_{n}$ of a von Neumann algebra $A$. The $k_{l}$ are then
bounded operators, which is why this corresponds to bounded $X$. In this
approach the closest analogy to the classical coordinate functions would be to
assume that the $k_{l}$ are self-adjoint, but for our development this is not
actually needed, though we do ultimately need a weaker assumption, as will be
pointed out below.

A direct translation of the classical cost function $c(x,y):=\left\|
x-y\right\|  ^{2}$ above, is then
\[
\sum_{l=1}^{n}\left|  k_{l}\otimes1-1\otimes k_{l}\right|  ^{2}
\]
where $1$ denotes the unit of $A$ and $\left|  a\right|  ^{2}:=a^{\ast}a$ for
$a$ in any $\ast$-algebra. However, noncommutativity of $A$ makes this direct
translation untenable.

Instead, for the cost of transport between two faithful normal states $\mu$
and $\nu$ on $A$, we, at least heuristically, propose to use
\begin{equation}
c=\sum_{l=1}^{n}\left|  k_{l}\otimes1-1\otimes(S_{\nu}k_{l}^{\ast}S_{\nu
})\right|  ^{2}\label{heuristieseKoste}
\end{equation}
where $S_{\nu}$ is the conjugate linear (and typically unbounded) linear
operator from Tomita-Takesaki theory associated with $\nu$. As this $c$
appears to depend on one of the states, namely $\nu$, it is immediately clear
that this is quite different from the classical case, where no such dependence
on the probability measures are present. However, even though $c$ has been
``contextualized'' in this sense for the states involved, the elements $k_{l}$
of $A$ do not depend on the states. Furthermore, it is not clear that this can
lead to a distance that is indeed symmetric, but the apparent asymmetry
between the two states $\mu$ and $\nu$ in $c$ can be circumvented using a
condition involving the modular group. The first indication that we should use
(\ref{heuristieseKoste}), was that it leads to zero distance between a state
and itself (see Proposition \ref{W2(mu,mu)=0}).

The fact that $S_{\nu}$ is typically unbounded and $S_{\nu}k_{l}^{\ast}S_{\nu
}$ not an element of $A$ (in standard form) nor of its commutant $A^{\prime}$,
is a complication which we can sidestep by rewriting the cost for a
noncommutative transport plan in a more convenient form which will give us a
rigorous definition of the cost of transportation form $\mu$ to $\nu$. We then
do not refer to $c$ above directly, but only to the cost of transport from one
state to another in terms of the $k_{l}$. A detailed discussion of this
appears in Section \ref{AfdKoste&Afst}.

This approach turns out to fit perfectly with bimodules, or correspondences in
the sense of Connes, and the relative tensor product of bimodules provides the
ideal structure to prove the triangle inequality for the Wasserstein distance
$W_{2}$ which we are going to define along the lines above.

An important point is the following: To complete the proof that our $W_{2}$ is
a metric, we assume that $A$ is finitely generated, in analogy to
$\mathbb{R}^{n}$ being finite dimensional. More precisely, that $A$ is
generated by $k_{1},...,k_{n}$ above, with $\{k_{1}^{\ast},...,k_{n}^{\ast
}\}=\{k_{1},...,k_{n}\}$; note that $k_{l}^{\ast}=k_{l}$ is not required. This
is needed to show that $W_{2}(\mu,\nu)=0$ for faithful normal states on $A$,
implies that $\mu=\nu$. However, the triangle inequality, symmetry and
$W_{2}(\mu,\mu)=0$ all work without this assumption.

We note that our use of generators parallels the approach of \cite{BV}. In
particular, they also assume their C*-algebra to be finitely generated to
obtain Wasserstein metrics on the space of tracial states.

Transport plans and related cyclic representations are discussed in Section
\ref{AfdOordPlan}, setting up the basic framework and much notation for the
rest of the paper. Our basic definitions related to the cost of transport and
Wasserstein distances follow in Section \ref{AfdKoste&Afst}. The subsequent
three sections then prove the various properties of a metric in turn. After
that, the last section gives an indication of how the setting we use may be expanded.

\section{Transport plans and cyclic representations}\label{AfdOordPlan}

This section sets up the framework to be used in the rest of the paper, in
particular the formulation of transport plans. Transport plans lead to cyclic
representation which play a central role in this paper. These representations
and the bimodule structure on their Hilbert spaces, are also discussed in this
section. The notation introduced here will be used throughout the sequel.

Although our focus is eventually on the faithful normal states of one von
Neumann algebra at a time, a number of aspects of our approach are clarified
by starting off with three von Neumann algebra. This is ultimately related to
the three elements of a metric space appearing in the formulation of the
triangle inequality.

We consider three (necessarily $\sigma$-finite) von Neumann algebras $A$, $B $
and $C$ with faithful normal states $\mu$, $\nu$ and $\xi$ respectively. We
assume that they are all in standard form, meaning that $A$ is a\ von Neumann
algebra on a Hilbert space $G_{\mu}$ with a cyclic and separating vector
$\Lambda_{\mu}$ for $A$ such that
\[
\mu(a)=\left\langle \Lambda_{\mu},a\Lambda_{\mu}\right\rangle
\]
for all $a\in A$, which allows us to define a state $\mu^{\prime}$ on the
commutant $A^{\prime}$ by
\[
\mu^{\prime}(a^{\prime})=\left\langle \Lambda_{\mu},a^{\prime}\Lambda_{\mu
}\right\rangle
\]
for all $a^{\prime}\in A^{\prime}$. The corresponding notation will be used
for $(B,\nu)$ and $(C,\xi)$ as well.

The starting point of our development is the definition of \emph{transport
plans}, also known as \emph{couplings}.

\begin{definition}
\label{oordPlan}A \emph{transport plan from }$\mu$\emph{\ to }$\nu$ is a state
$\omega$ on the algebraic tensor product $A\odot B^{\prime}$ such that
\[
\omega(a\otimes1)=\mu(a)\text{ \ \ and \ \ }\omega(1\otimes b^{\prime}
)=\nu^{\prime}(b^{\prime})
\]
for all $a\in A$ and $b^{\prime}\in B^{\prime}$. Denote the set of all
transport plans from $\mu$ to $\nu$ by $T(\mu,\nu)$.
\end{definition}

A transport plan from $\nu$ to $\xi$ will be denoted by $\psi$, and from $\mu$
to $\xi$ by $\varphi$.

To obtain symmetry of a Wasserstein distance on the faithful normal states, we
are going to use a restricted set of transport plans. To define them, we use a
property which was called balance between dynamical systems in \cite{DS2}:

\begin{definition}
\label{balans}Given \emph{dynamical systems} $\mathbf{A}=(A,\alpha,\mu)$ and
$\mathbf{B}=\left(  B,\beta,\nu\right)  $, where $\alpha$ and $\beta$ are
unital completely positive (u.c.p.) maps $\alpha:A\rightarrow A$ and
$\beta:B\rightarrow B$ such that $\mu\circ\alpha=\mu$ and $\nu\circ\beta=\nu$,
we say that $\mathbf{A}$ and $\mathbf{B}$ (in this order) are in
\emph{balance} with respect to a transport plan $\omega$ from $\mu$ to $\nu$,
written as
\[
\mathbf{A}\omega\mathbf{B},
\]
if
\[
\omega(\alpha(a)\otimes b^{\prime})=\omega(a\otimes\beta^{\prime}(b^{\prime}))
\]
for all $a\in A$ and $b^{\prime}\in B^{\prime}$, where $\beta^{\prime
}:B^{\prime}\rightarrow B^{\prime}$ is the dual of $\beta$ defined by
\[
\left\langle \Lambda_{\nu},b\beta^{\prime}(b^{\prime})\Lambda_{\nu
}\right\rangle =\left\langle \Lambda_{\nu},\beta(b)b^{\prime}\Lambda_{\nu
}\right\rangle
\]
for all $b\in B$ and $b^{\prime}=B^{\prime}$ (see \cite{AC} for the theory
behind such duals and \cite[Section 2]{DS2} for a summary).
\end{definition}

Our restricted set of transport plans are then defined as follows:

\begin{definition}
\label{modOord}The set of \emph{modular} transport plans from $\mu$ to $\nu$
is
\[
T_{\sigma}(\mu,\nu):=\left\{  \omega\in T(\mu,\nu):(A,\sigma_{t}^{\mu}
,\mu)\omega\left(  B,\sigma_{t}^{\nu},\nu\right)  \text{ for all }
t\in\mathbb{R}\right\}
\]
where $\sigma^{\mu}$ and $\sigma^{\nu}$ are the modular groups associated with
$\mu$ and $\nu$ respectively.
\end{definition}

Note that $T_{\sigma}(\mu,\nu)$ is not empty, as it contains $\mu\odot
\nu^{\prime}$.

In the remainder of this section we consider general transport plans, but from
the next section modular transport plans will become the focus.

Transport plans lead to cyclic representations and we need to keep track how
the various cyclic representations relate to one another. Let $(H_{\omega}
,\pi_{\omega},\Omega)$ be a cyclic representation of $(A\odot B^{\prime
},\omega)$, i.e. $\Omega\in H_{\omega}$ is cyclic for $A\odot B^{\prime}$ and
\[
\omega=\left\langle \Omega,\pi_{\omega}(\cdot)\Omega\right\rangle _{\omega},
\]
where for emphasis we write the inner product of $H_{\omega}$ as $\left\langle
\cdot,\cdot\right\rangle _{\omega}$. This induces cyclic representations
$(H_{\mu}^{\omega},\pi_{\mu}^{\omega},\Omega)$ and $(H_{\nu}^{\omega},\pi
_{\nu^{\prime}}^{\omega},\Omega)$ of $(A,\mu)$ and $(B^{\prime},\nu^{\prime})$
respectively, by setting
\[
H_{\mu}^{\omega}:=\overline{\pi_{\omega}(A\otimes1)\Omega}\text{\ \ \ and
\ }\pi_{\mu}^{\omega}(a):=\pi_{\omega}(a\otimes1)|_{H_{\mu}^{\omega}}
\]
for all $a\in A$, and
\[
H_{\nu}^{\omega}:=\overline{\pi_{\omega}(1\otimes B^{\prime})\Omega}\text{
\ \ and \ }\pi_{\nu^{\prime}}^{\omega}(b^{\prime}):=\pi_{\omega}(1\otimes
b^{\prime})|_{H_{\nu}^{\omega}}
\]
for all $b^{\prime}\in B^{\prime}$. A unitary equivalence
\begin{equation}
u_{\nu}:G_{\nu}\rightarrow H_{\nu}^{\omega}\label{GnuHnu}
\end{equation}
from $(G_{\nu},$id$_{B^{\prime}},\Lambda_{\nu})$ to $(H_{\nu}^{\omega}
,\pi_{\nu^{\prime}},\Omega)$ is given by
\[
u_{\nu}b^{\prime}\Lambda_{\nu}:=\pi_{\nu^{\prime}}^{\omega}(b^{\prime})\Omega
\]
for all $b^{\prime}\in B^{\prime}$. Then
\[
\pi_{\nu^{\prime}}^{\omega}(b^{\prime})=u_{\nu}b^{\prime}u_{\nu}^{\ast}
\]
for all $b^{\prime}\in B^{\prime}$. By setting
\[
\pi_{\nu}^{\omega}(b):=u_{\nu}bu_{\nu}^{\ast}
\]
for all $b\in B$, we also obtain a cyclic representation $(H_{\nu}^{\omega
},\pi_{\nu}^{\omega},\Omega)$ of $(B,\nu)$.

Denoting the modular conjugation for $B$ associated to $\Lambda_{\nu}$ by
$J_{\nu}$, while the modular conjugation for $B$ associated to $\Omega$ is
denoted by $J_{\nu}^{\omega}$, one finds that
\[
J_{\nu}^{\omega}u_{\nu}=u_{\nu}J_{\nu}
\]
from which it follows that
\[
\pi_{\nu}^{\omega}=j_{\nu}^{\omega}\circ\pi_{\nu^{\prime}}^{\omega}\circ
j_{\nu}%
\]
where $j_{\nu}^{\omega}:=J_{\nu}^{\omega}(\cdot)^{\ast}J_{\nu}^{\omega}$ and
$j_{\nu}:=J_{\nu}(\cdot)^{\ast}J_{\nu}$ on $\mathcal{B}(H_{\nu}^{\omega})$ and
$\mathcal{B}(G_{\nu})$ respectively. Here $\mathcal{B}(G_{\nu})$ is the von
Neumann algebra of all bounded linear operators $G_{\nu}\rightarrow G_{\nu}$.

A key structure that emerges from the transport plan $\omega$, is that
$H_{\omega}$ is an $A$-$B$-bimodule via
\[
axb:=\pi_{\omega}(a\otimes j_{\nu}(b))x
\]
for all $a\in A$, $b\in B$ and $x\in H_{\omega}$. The required normality
conditions follow from \cite[Theorem 3.3]{BCM}.

The forgoing discussion of course also holds for $\psi$ and $\varphi$, using
corresponding notation for the induced representations, and in particular
denoting the cyclic vectors by
\[
\Psi\text{ and }\Phi
\]
respectively.

A central fact which will be of great importance to us, is that transport
plans from $\mu$ to $\nu$ are in a one-to-one correspondence with u.c.p. maps
$E:A\rightarrow B$ such that $\nu\circ E=\mu$. Let
\[
\varpi_{B}:B\odot B^{\prime}\rightarrow\mathcal{B}(G_{\nu})
\]
be the unital $\ast$-homomorphism defined by extending $\varpi_{B}(b\otimes
b^{\prime})=bb^{\prime}$ using the universal property of tensor products. Then
set
\[
\delta_{\nu}=\left\langle \Lambda_{\nu},\varpi_{B}(\cdot)\Lambda_{\nu
}\right\rangle .
\]
Note that $\delta_{\nu}$ is a transport plan from $\nu$ to itself. It now
follows that there is a unique map
\[
E_{\omega}:A\rightarrow B
\]
such that%
\begin{equation}
\omega(a\otimes b^{\prime})=\delta_{\nu}(E_{\omega}(a)\otimes b^{\prime
})\label{E}%
\end{equation}
for all $a\in A$ and $b^{\prime}\in B^{\prime}$. This map $E_{\omega}$ is
linear, normal (i.e. $\sigma$-weakly continuous), u.c.p. (unital and
completely positive) and satisfies
\[
\nu\circ E_{\omega}=\mu.
\]
Conversely, given a u.c.p. map $E:A\rightarrow B$ such that $\nu\circ E=\mu$,
it defines a transport plan $\omega_{E}$ from $\mu$ to $\nu$ by
\[
\omega_{E}(a\otimes b^{\prime})=\delta_{\nu}(E(a)\otimes b^{\prime})
\]
which satisfies $E=E_{\omega_{E}}$. Technical details can be found in
\cite[Section 3]{DS2}. This correspondence is well-known and widely used in
various forms and degrees of generality, for example in finite dimensions in
quantum information theory, where it is known as the Choi-Jamio{\l}kowski duality
(see \cite{Choi, deP, Jam} for the origins and \cite{JLF} for an overview),
and has also found application in the theory of noncommutative joinings
\cite{BCM}. In effect we can now view u.c.p. maps as transport plans, as was
done in \cite{dePT}, however, we reserve the term ``transport plan'' for how
it is used in Definition \ref{oordPlan}. In classical
probability theory the correspondence between couplings and Markov operators
was studied for couplings of a measure with itself in \cite{B}, and more
generally in \cite{MT}, though it seems not to be widely used in classical
optimal transport.

By \cite[Theorem 4.1]{DS2} we can then express $\mathbf{A}\omega\mathbf{B}$ in
Definition \ref{balans} as
\[
E_{\omega}\circ\alpha=\beta\circ E_{\omega}%
\]
which is often a convenient way to check if a transport plan is modular.

As in \cite[Section 5]{DS2}, this correspondence allows us to compose
transport plans $\omega\in T(\mu,\nu)$ and $\psi\in T(\nu,\xi)$ to obtain a
transport plan $\omega\circ\psi\in T(\mu,\xi)$ defined via
\[
E_{\omega\circ\psi}=E_{\psi}\circ E_{\omega}.
\]
In the remainder of this section we are particularly interested in this
transport plan, and how it relates to the relative tensor product of the
bimodules $H_{\omega}$ and $H_{\psi}$. So we set
\[
\varphi=\omega\circ\psi
\]
and consider the relative tensor product
\[
H:=H_{\omega}\otimes_{\nu}H_{\psi}.
\]
An extended discussion of these products can be found in \cite[Section
IX.3]{T2}, but also see \cite{Fal} and the early work \cite{Sa}. The original
source is Connes' work on correspondences, which has not been published in
full, but see \cite[Appendix V.B]{Con94} for a partial exposition.

The relative tensor product $H$ is itself an $A$-$C$-bimodule, and contains
the vector
\[
\Phi:=\Omega\otimes_{\nu}\Psi
\]
which can indeed be taken as the cyclic vector in the cyclic representation
$(H_{\varphi},\pi_{\varphi},\Phi)$ of $(A\odot C^{\prime},\varphi)$. This is
because the representation can be obtained from the bimodule $H$ using the not
necessarily cyclic representation $\pi$ of $A\odot C^{\prime}$ defined
through
\[
\pi(a\otimes c^{\prime})z:=azj_{\xi}(c^{\prime})
\]
for all $z\in H$, by restriction:
\[
H_{\varphi}:=\overline{\pi(A\odot C^{\prime})\Phi}\text{ \ \ and \ \ }
\pi_{\varphi}(t):=\pi(t)|_{H_{\varphi}}%
\]
for all $t\in A\odot C^{\prime}$. That this indeed provides a cyclic
representation for $\varphi$, follows from \cite[Corollary 5.7]{DS2}.

\section{Cost of transport and the distance $W_{2}$}\label{AfdKoste&Afst}

Here we introduce the cost of transport as will be used in this paper,
followed by the paper's central definition, namely that of Wasserstein
distance $W_{2}$.

An important formula related to cost of transport, expressed in terms of the
setup and notation from the previous section and norm $\left\|
\cdot\right\|  _{\omega}$ of the Hilbert space $H_{\omega}$, is the following:
\begin{equation}
\left\|  \pi_{\mu}^{\omega}(a)\Omega-\pi_{\nu}^{\omega}(b)\Omega\right\|
_{\omega}^{2}=\mu(a^{\ast}a)+\nu(b^{\ast}b)-\nu(E_{\omega}(a)^{\ast}
b)-\nu(b^{\ast}E_{\omega}(a))\label{normVsE}%
\end{equation}
for all $a\in A$ and $b\in B$. This is derived by a straightforward sequence
of manipulations, in particular making use of the projection $P_{\nu}$ of
$H_{\omega}$ onto $H_{\nu}^{\omega}$, in terms of which one has
\begin{align*}
\left\langle \pi_{\mu}^{\omega}(a)\Omega,\pi_{\nu}^{\omega}(b)\Omega
\right\rangle _{\omega}  &  =\left\langle \pi_{\omega}(a\otimes1)\Omega
,P_{\nu}\pi_{\nu}^{\omega}(b)\Omega\right\rangle _{\omega}\\
&  =\left\langle u_{\nu}^{\ast}P_{\nu}\pi_{\omega}(a\otimes1)u_{\nu}
\Lambda_{\nu},b\Lambda_{\nu}\right\rangle \\
&  =\left\langle E_{\omega}(a)\Lambda_{\nu},b\Lambda_{\nu}\right\rangle \\
&  =\nu(E_{\omega}(a)^{\ast}b)
\end{align*}
by \cite[Proposition 3.1]{DS2}.
%Skrif bl. 107.

Now set
\[
A=B=C.
\]
By the theory of standard forms \cite{Ar74, Con74, Ha75} (also see
\cite[Theorem 2.5.31]{BR1}) we can consequently take $G=G_{\mu}=G_{\nu}
=G_{\xi}$ and represent the faithful normal states $\mu$, $\nu$ and $\xi$ on
$A$ by three cyclic and separating vectors $\Lambda_{\mu},\Lambda_{\nu
},\Lambda_{\xi}\in G$ for $A$. As usual in Tomita-Takesaki theory, we define a
closed conjugate linear operator $S_{\nu}$ in $G$ through
\[
S_{\nu}a\Lambda_{\nu}=a^{\ast}\Lambda_{\nu}
\]
for all $a\in A$. Following the idea (\ref{heuristieseKoste}) in the
introduction, we use the form of the cyclic representation to formally
manipulate the expression
\begin{equation}
\omega\left(  \left|  a\otimes1-1\otimes(S_{\nu}b^{\ast}S_{\nu})\right|
^{2}\right) \label{kosItvS}
\end{equation}
for a transport plan $\omega$ from $\mu$ to $\nu$ and $a,b\in A$ as if
$S_{\nu}b^{\ast}S_{\nu}$ lies in $A^{\prime}$ (it is actually only affiliated
with $A^{\prime}$), to obtain
%Skrif bl. 75.
\begin{equation}
\left\|  \pi_{\mu}^{\omega}(a)\Omega-\pi_{\nu}^{\omega}(b)\Omega\right\|
_{\omega}^{2}.\label{kosItvVoorst}
\end{equation}
Alternatively, one can obtain the right hand side of (\ref{normVsE}) from
(\ref{kosItvS}) by a different sequence of formal manipulations using
(\ref{E}).
%Skrif bl. 46-49 en 56-57.
The expression (\ref{kosItvVoorst}) will be used as the basis for rigorously
defining the cost of transport, as each term in the heuristic expression
$\omega(c)$ for the cost of transport is of the form (\ref{kosItvS}). Note
that $\omega(c)$ is a noncommutative translation of the integral in
(\ref{klasKoste}).

\begin{definition}
\label{I}Given $k_{1},...,k_{n}\in A$ and writing $k=(k_{1},...,k_{n})$, the
associated \emph{transport cost function} $I$, which gives the cost of
transport $I(\omega)$ from $\mu$ to $\nu$ for the transport plan $\omega\in
T(\mu,\nu)$ and faithful normal states $\mu$ and $\nu$ on $A$, is defined to
be
\[
I(\omega)=\left\|  \pi_{\mu}^{\omega}(k)\Omega-\pi_{\nu}^{\omega}
(k)\Omega\right\|  _{\oplus\omega}^{2}
\]
where we have written%
\[
\pi_{\mu}^{\omega}(k)\Omega\equiv\left(  \pi_{\mu}^{\omega}(k_{1}
)\Omega,...,\pi_{\mu}^{\omega}(k_{n})\Omega\right)  \in\bigoplus_{l=1}
^{n}H_{\omega}
\]
and $\left\|  \cdot\right\|  _{\oplus\omega}$ denotes the norm on
$\bigoplus_{l=1}^{n}H_{\omega}$. I.e.,
\begin{equation}
I(\omega)=\sum_{l=1}^{n}\left\|  \pi_{\mu}^{\omega}(k_{l})\Omega-\pi_{\nu
}^{\omega}(k_{l})\Omega\right\|  _{\omega}^{2}.\label{Iterme}
\end{equation}
\end{definition}

This realizes the idea outlined in the introduction, and parallels the use of
generators in \cite{BV}'s definition of Wasserstein metrics. It can be written in a second equivalent form in
terms of the u.c.p. map $E_{\omega}$ corresponding to $\omega$, using
(\ref{normVsE}), which is more convenient for certain purposes:
\begin{equation}
I(\omega)=\sum_{l=1}^{n}\left[  \mu(k_{l}^{\ast}k_{l})+\nu(k_{l}^{\ast}
k_{l})-\nu(E_{\omega}(k_{l})^{\ast}k_{l})-\nu(k_{l}^{\ast}E_{\omega}
(k_{l}))\right]  .\label{Ialt}
\end{equation}

A more complete notation for $I$ would be $I_{k}$, but no confusion will arise.

We are now in a position to define the distance function $W_{2}$ on the
faithful normal states, which we aim to prove is a metric. By a \emph{distance
function} we simply mean a function $d:X\times X\rightarrow\mathbb{R}$ on some
set $X$, which is potentially a metric on $X$, in order to distinguish it from
a verified metric.

If we directly generalize the classical case, one would define the distance
function as the infimum of the square root of the cost over all transport
plans $\omega$ from $\mu$ to $\nu$. However, in our noncommutative setup,
this causes problems with symmetry of the distance function. We solve this
problem by only allowing the modular transport plans $T_{\sigma}(\mu,\nu)$
from Definition \ref{modOord}. This is a natural restriction on the allowed
transport plans which becomes trivial when $\mu$ and $\nu$ are traces. Similar
restrictions have been used in a related context in the theory of
noncommutative joinings, also involving couplings (not viewed as transport
plans in that case). See in particular \cite{BCM, D3}. Modular transport plans
are exactly what we need to obtain symmetry in a natural way, as will be seen
in the next section.

Thus we present the main definition of this paper:

\begin{definition}
\label{W2}Denote the set of faithful normal states on a $\sigma$-finite von
Neumann algebra $A$ by $\mathfrak{F}(A)$. Given $k_{1},...,k_{n}\in A$, we
define the associated \emph{Wasserstein distance} $W_{2}$\ on $\mathfrak{F}
(A)$ by
\[
W_{2}(\mu,\nu):=\inf_{\omega\in T_{\sigma}(\mu,\nu)}I(\omega)^{1/2}
\]
for all $\mu,\nu\in\mathfrak{F}(A)$, in terms of Definition \ref{I}.
\end{definition}

More completely $W_{2}$ can be called a Wasserstein distance of order 2, or a
quadratic Wasserstein distance.

Having defined our candidate $W_{2}$, we have to investigate if it is indeed a
metric. This is done in the next three sections.

\section{The triangle inequality}

In this section we intend to show that $W_{2}$ satisfies the triangle
inequality. To do this we use the bimodule structure and relative tensor
product discussed in Section \ref{AfdOordPlan}.
This is analogous to \cite{BV}'s use of free products with amalgamation to prove the triangle inequality in their approach.

The main technical point in
this regard is the following lemma, which is a key (and we presume well-known)
feature of the quotient construction of the relative tensor product,
essentially telling us that the imbeddings of $H_{\nu}$ into $H_{\omega}$ and
$H_{\psi}$ respectively, are identified in the relative tensor product
$H=H_{\omega}\otimes_{\nu}H_{\psi}$. This is precisely the reason that the
relative tensor product is the natural setting to prove the triangle
inequality. Please refer to \cite[Section IX.3]{T2} for further background on
relative tensor products and detail on related operators used in the lemma's proof.

\begin{lemma}
\label{inbed}In terms of the setup and notation from Section
\ref{AfdOordPlan} and the imbeddings
\[
\iota_{\omega}:H_{\omega}\rightarrow H:x\mapsto x\otimes_{\nu}\Psi
\]
and
\[
\iota_{\psi}:H_{\psi}\rightarrow H:y\mapsto\Omega\otimes_{\nu}y
\]
%
%
%
%Hulle behou die inproduk? Balans skrif 4 bl. 567.
one has
\[
\iota_{\omega}(\pi_{\nu}^{\omega}(b)\Omega)=\iota_{\psi}(\pi_{\nu}^{\psi
}(b)\Psi)
\]
for all $b\in B$.
%Balans skrif bl. 916-921.
\end{lemma}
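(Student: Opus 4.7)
The plan is to use the ``flow-through'' identity of the relative tensor product, translated through the unitary equivalences from Section~\ref{AfdOordPlan}.

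First I would introduce the analog $v_{\nu}\colon G_{\nu}\to H_{\nu}^{\psi}$ of $u_{\nu}$ for the transport plan $\psi$. Since $\nu$ is the first state of $\psi$, the natural definition is $v_{\nu}(b\Lambda_{\nu}):=\pi_{\nu}^{\psi}(b)\Psi=\pi_{\psi}(b\otimes 1)\Psi$ for $b\in B$, yielding a unitary with $v_{\nu}\Lambda_{\nu}=\Psi$. Combined with $\pi_{\nu}^{\omega}(b)\Omega=u_{\nu}(b\Lambda_{\nu})$, the claim becomes the identity
\[
u_{\nu}(b\Lambda_{\nu})\otimes_{\nu}\Psi \;=\; \Omega\otimes_{\nu}v_{\nu}(b\Lambda_{\nu})
\]
in $H=H_{\omega}\otimes_{\nu}H_{\psi}$.

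Next, I would view both sides as values at $g=b\Lambda_{\nu}$ of two bounded linear maps $W_{\omega},W_{\psi}\colon G_{\nu}\to H$ defined by $W_{\omega}(g):=u_{\nu}(g)\otimes_{\nu}\Psi$ and $W_{\psi}(g):=\Omega\otimes_{\nu}v_{\nu}(g)$. Both map $\Lambda_{\nu}$ to the cyclic vector $\Phi=\Omega\otimes_{\nu}\Psi$, so agreement at $\Lambda_{\nu}$ is immediate; it suffices to show $W_{\omega}=W_{\psi}$ on all of $G_{\nu}$.

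To do so, I would check agreement on the dense subspace $B'\Lambda_{\nu}\subset G_{\nu}$. For $g=b'\Lambda_{\nu}$ with $b'\in B'$, by definition of $u_{\nu}$ we have $u_{\nu}(g)=\pi_{\omega}(1\otimes b')\Omega$, which in terms of the right $B$-action of Section~\ref{AfdOordPlan} is $\Omega\cdot j_{\nu}(b')$ (using that $j_{\nu}\colon B\to B'$ is an involutive antiisomorphism). The flow-through identity of the relative tensor product, with right $B$-action on $H_{\omega}$ given by $\pi_{\omega}(1\otimes j_{\nu}(\cdot))$ and left $B$-action on $H_{\psi}$ given by $\pi_{\psi}(\cdot\otimes 1)$, then gives
\[
W_{\omega}(b'\Lambda_{\nu})=(\Omega\cdot j_{\nu}(b'))\otimes_{\nu}\Psi=\Omega\otimes_{\nu}\pi_{\psi}(j_{\nu}(b')\otimes 1)\Psi,
\]
and the right-hand side is to be identified with $W_{\psi}(b'\Lambda_{\nu})=\Omega\otimes_{\nu}v_{\nu}(b'\Lambda_{\nu})$ by unravelling the bounded-vector description of the relative tensor product. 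Extending by continuity then gives $W_{\omega}=W_{\psi}$ and hence the lemma.

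The main technical obstacle is the last identification step, which involves carefully matching the Tomita involution $j_{\nu}$ (and thus the modular conjugation $J_{\nu}$) appearing in the right $B$-action on $H_{\omega}$ with the unitary $v_{\nu}$ defined via the cyclic representation of $(B,\nu)$ on $H_{\nu}^{\psi}$. The framework of bounded vectors in the relative tensor product, as developed in \cite[Section IX.3]{T2} and the related references cited in Section~\ref{AfdOordPlan}, is what makes this matching rigorous.
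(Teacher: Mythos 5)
Your overall strategy (reduce to the equality of two bounded maps $W_{\omega},W_{\psi}:G_{\nu}\rightarrow H$ and check them on a dense subspace) is reasonable, but the two key intermediate equalities in your computation on $B^{\prime}\Lambda_{\nu}$ are both false for non-tracial $\nu$, and the argument therefore does not go through as written. First, the ``flow-through'' identity you invoke is balanced with respect to the right action for which $L_{\nu}(\pi_{\nu}^{\omega}(b)\Omega\diamond c)=L_{\nu}(\pi_{\nu}^{\omega}(b)\Omega)c$, namely $\pi_{\nu}^{\omega}(b)\Omega\diamond c:=\pi_{\nu}^{\omega}(bc)\Omega$; this differs from the commutant-induced right action $x\cdot c=\pi_{\omega}(1\otimes j_{\nu}(c))x$ used in your step by the modular twist $\sigma_{\mp i/2}^{\nu}$, since $\Omega\cdot c=u_{\nu}(j_{\nu}(c)\Lambda_{\nu})=u_{\nu}(\Delta_{\nu}^{1/2}c\Lambda_{\nu})$ rather than $u_{\nu}(c\Lambda_{\nu})$. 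Hence $(\Omega\cdot j_{\nu}(b^{\prime}))\otimes_{\nu}\Psi\neq\Omega\otimes_{\nu}\pi_{\psi}(j_{\nu}(b^{\prime})\otimes1)\Psi$ in general. Second, your final identification is also off by a modular factor: $\pi_{\psi}(j_{\nu}(b^{\prime})\otimes1)\Psi=v_{\nu}(j_{\nu}(b^{\prime})\Lambda_{\nu})=v_{\nu}(\Delta_{\nu}^{-1/2}b^{\prime}\Lambda_{\nu})$, which differs from $v_{\nu}(b^{\prime}\Lambda_{\nu})$ unless $\Delta_{\nu}$ acts trivially on $b^{\prime}\Lambda_{\nu}$. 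One can check both failures concretely already for $A=B=M_{2}(\mathbb{C})$ with a non-tracial faithful state. The two errors happen to cancel (the endpoints of your chain are indeed equal), but each displayed equality is false, so no proof results; and deferring the matching to ``the framework of bounded vectors'' does not repair it, because the discrepancy is a genuine $\Delta_{\nu}^{1/2}$, not a domain issue.

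The fix is to avoid $B^{\prime}\Lambda_{\nu}$ and $j_{\nu}$ altogether and work on the dense subspace $B\Lambda_{\nu}$ instead --- but note that there the claimed equality of $W_{\omega}$ and $W_{\psi}$ at $b\Lambda_{\nu}$ \emph{is} the statement of the lemma, so no reduction is gained and one must compute. This is what the paper does: writing $x=\pi_{\nu}^{\omega}(b)\Omega$ and $y=\pi_{\nu}^{\psi}(b)\Psi$, it uses $L_{\nu}(x)=\pi_{\nu}^{\omega}(b)u_{\nu}$, so that $L_{\nu}(x)^{\ast}L_{\nu}(x)=b^{\ast}b$, $L_{\nu}(\Omega)^{\ast}L_{\nu}(x)=b$ and $L_{\nu}(\Omega)^{\ast}L_{\nu}(\Omega)=1$, and then expands $\left\|\iota_{\omega}(x)-\iota_{\psi}(y)\right\|^{2}$ via the defining sesquilinear form of the relative tensor product into four terms, each equal to $\nu(b^{\ast}b)$, which cancel. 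You may also want to note that your continuity step silently assumes $x\mapsto x\otimes_{\nu}\Psi$ is bounded on all of $H_{\omega}$ (true because $\Psi$ is a bounded vector for the left $B$-module $H_{\psi}$, but it needs saying); the paper's direct computation avoids this as well.
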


\begin{proof}
Given any $b\in B$, write $x=\pi_{\nu}^{\omega}(b)\Omega$ and $y=\pi_{\nu
}^{\psi}(b)\Psi$. In setting up the relative tensor product, one considers an
operator $L_{\nu}(x):G_{\nu}\rightarrow H_{\omega}$ which in our setup can be
written as $L_{\nu}(x)=\pi_{\nu}^{\omega}(b)u_{\nu}$ in terms of
(\ref{GnuHnu}). From the bilinear form used in the quotient construction of
the relative tensor product (see \cite[Proposition IX.3.15 and Definition
IX.3.16]{T2}) one finds
\begin{align*}
\left\|  \iota_{\omega}(x)-\iota_{\psi}(y)\right\|  ^{2} &  =\left\langle
\Psi,L_{\nu}(x)^{\ast}L_{\nu}(x)\Psi\right\rangle _{\psi}-\left\langle
y,L_{\nu}(\Omega)^{\ast}L_{\nu}(x)\Psi\right\rangle _{\psi}\\
&  -\left\langle \Psi,L_{\nu}(x)^{\ast}L_{\nu}(\Omega)\Psi\right\rangle
_{\psi}+\left\langle y,L_{\nu}(\Omega)^{\ast}L_{\nu}(\Omega)\Psi\right\rangle
_{\psi}\\
&  =0,
\end{align*}
since one has $L_{\nu}(x)^{\ast}L_{\nu}(x)=u_{\nu}^{\ast}\pi_{\nu}^{\omega
}(b^{\ast}b)u_{\nu}=b^{\ast}b$ etc., and where we are using the left
$B$-module structure of $H_{\psi}$. (Here $\left\langle \cdot,\cdot
\right\rangle _{\psi}$ is the inner product of $H_{\psi}$.)
\end{proof}

%Moet ek hierdie bewys meer volledig gee? Nee.
%Ek dink dis standaard kennis, maar word nie uitdruklik in [T2] genoem nie.
The second fact we need is the following:

\begin{lemma}
\label{inbed2}In terms of the setup and notation from Section
\ref{AfdOordPlan}, with $\varphi=\omega\circ\psi$, we have
\[
\iota_{\omega}(\pi_{\mu}^{\omega}(a)\Omega)=\pi_{\mu}^{\varphi}(a)\Phi
\]
and%
\[
\iota_{\psi}(\pi_{\xi}^{\psi}(c)\Psi)=\pi_{\xi}^{\varphi}(c)\Phi
\]
for all $a\in A$ and $c\in C$.
\end{lemma}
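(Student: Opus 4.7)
The plan is to unpack the definitions of $\pi_{\mu}^{\varphi}$ and $\pi_{\xi}^{\varphi}$ using the bimodule structure of $H=H_{\omega}\otimes_{\nu}H_{\psi}$ from which the cyclic representation $(H_{\varphi},\pi_{\varphi},\Phi)$ is built. The first identity should drop out immediately from the left $A$-action on the bimodule, while the second requires a small detour through the commutant, since $\pi_{\xi}^{\varphi}$ is only defined indirectly via the intertwining unitary from $G_{\xi}$.

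For $a\in A$: since $\Phi\in H_{\varphi}$, one has $\pi_{\mu}^{\varphi}(a)\Phi=\pi_{\varphi}(a\otimes 1)\Phi=\pi(a\otimes 1)\Phi$; the formula $\pi(a\otimes c')z=azj_{\xi}(c')$ together with $j_{\xi}(1)=1$ identifies this with the left $A$-action on the bimodule $H$ applied to $\Phi$. Since that $A$-action on $H_{\omega}\otimes_{\nu}H_{\psi}$ operates via the left factor $H_{\omega}$, applied to $\Phi=\Omega\otimes_{\nu}\Psi$ it gives $\pi_{\omega}(a\otimes 1)\Omega\otimes_{\nu}\Psi$. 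Since $\pi_{\omega}(a\otimes 1)\Omega\in H_{\mu}^{\omega}$ equals $\pi_{\mu}^{\omega}(a)\Omega$, we conclude $\pi_{\mu}^{\varphi}(a)\Phi=\pi_{\mu}^{\omega}(a)\Omega\otimes_{\nu}\Psi=\iota_{\omega}(\pi_{\mu}^{\omega}(a)\Omega)$.

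For $c\in C$: I first establish the analogous commutant statement $\pi_{\xi'}^{\varphi}(c')\Phi=\iota_{\psi}(\pi_{\xi'}^{\psi}(c')\Psi)$ for $c'\in C'$. Indeed $\pi_{\varphi}(1\otimes c')\Phi=\pi(1\otimes c')\Phi=\Phi j_{\xi}(c')$, the latter being the right $C$-action on $H$; since this acts through the right factor $H_{\psi}$, one has $\Phi j_{\xi}(c')=\Omega\otimes_{\nu}(\Psi j_{\xi}(c'))$. The right $C$-action on $H_{\psi}$, coming from its $B$-$C$-bimodule structure, gives $\Psi j_{\xi}(c')=\pi_{\psi}(1\otimes j_{\xi}(j_{\xi}(c')))\Psi=\pi_{\psi}(1\otimes c')\Psi=\pi_{\xi'}^{\psi}(c')\Psi$, using that $j_{\xi}$ is an involution. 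Writing $u_{\xi}^{\varphi}:G_{\xi}\to H_{\xi}^{\varphi}$ and $u_{\xi}^{\psi}:G_{\xi}\to H_{\xi}^{\psi}$ for the unitaries of (\ref{GnuHnu}) attached to $\varphi$ and $\psi$, the commutant identity reads $u_{\xi}^{\varphi}(c'\Lambda_{\xi})=\iota_{\psi}(u_{\xi}^{\psi}(c'\Lambda_{\xi}))$ on the dense set $C'\Lambda_{\xi}\subset G_{\xi}$, so $u_{\xi}^{\varphi}=\iota_{\psi}\circ u_{\xi}^{\psi}$ by continuity. Since $(u_{\xi}^{\varphi})^{\ast}\Phi=\Lambda_{\xi}$ (take $c'=1$ in the defining relation), I conclude $\pi_{\xi}^{\varphi}(c)\Phi=u_{\xi}^{\varphi}(c\Lambda_{\xi})=\iota_{\psi}(u_{\xi}^{\psi}(c\Lambda_{\xi}))=\iota_{\psi}(\pi_{\xi}^{\psi}(c)\Psi)$.

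The main obstacle is bookkeeping: carefully tracking how the right $C$-action on the bimodule $H$ is realized through $j_{\xi}$ on $H_{\psi}$, and verifying that the two unitaries $u_{\xi}^{\varphi}$ and $u_{\xi}^{\psi}$ really do match up through $\iota_{\psi}$. Beyond this, both identities are essentially tautological consequences of how $(H_{\varphi},\pi_{\varphi},\Phi)$ sits inside the relative tensor product $H$.
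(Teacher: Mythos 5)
Your argument is correct, and both identities are established with the right amount of care; the only substantive point of comparison is how you handle the second identity. The paper's (very terse) proof routes it through the modular conjugations: one first shows $J_{\xi}^{\varphi}(\Omega\otimes_{\nu}y)=\Omega\otimes_{\nu}J_{\xi}^{\psi}y$ for $y\in H_{\xi}^{\psi}$ and then passes from $\pi_{\xi'}^{\varphi}$ to $\pi_{\xi}^{\varphi}$ via the relation $\pi_{\xi}^{\varphi}=j_{\xi}^{\varphi}\circ\pi_{\xi'}^{\varphi}\circ j_{\xi}$ from Section \ref{AfdOordPlan}. You instead pass through the intertwining unitaries, proving $u_{\xi}^{\varphi}=\iota_{\psi}\circ u_{\xi}^{\psi}$ and then using $\pi_{\xi}^{\varphi}(c)=u_{\xi}^{\varphi}c(u_{\xi}^{\varphi})^{\ast}$ together with $(u_{\xi}^{\varphi})^{\ast}\Phi=\Lambda_{\xi}$. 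The two routes are essentially interchangeable --- since $J_{\xi}^{\psi}u_{\xi}^{\psi}=u_{\xi}^{\psi}J_{\xi}$ and likewise for $\varphi$, your identity $u_{\xi}^{\varphi}=\iota_{\psi}\circ u_{\xi}^{\psi}$ immediately yields the paper's intermediate claim about $J_{\xi}^{\varphi}$ as a byproduct --- and both hinge on the same commutant-level computation $\pi_{\xi'}^{\varphi}(c')\Phi=\iota_{\psi}(\pi_{\xi'}^{\psi}(c')\Psi)$, which you carry out explicitly using the right $C$-action on the relative tensor product and the involutivity of $j_{\xi}$. Your version has the small advantage of staying entirely with linear isometries and never needing to identify the modular conjugation of the relative tensor product; the paper's version states the more standard bimodule-theoretic fact. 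One point worth making explicit if you write this up: the "by continuity" step needs $\iota_{\psi}$ to be isometric on $H_{\psi}$ (so that $\iota_{\psi}\circ u_{\xi}^{\psi}$ is bounded), which holds because $L_{\nu}(\Omega)^{\ast}L_{\nu}(\Omega)=u_{\nu}^{\ast}u_{\nu}=1$, and is the same fact the paper invokes in the proof of Proposition \ref{driehoek}.
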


\begin{proof}
The first is easy to verify, while the second follows by first showing that%
\[
J_{\xi}^{\varphi}(\Omega\otimes_{\nu}y)=\Omega\otimes_{\nu}J_{\xi}^{\psi}y
\]
for all $y\in H_{\xi}^{\psi}$.
%Skrif bl. 87 (en 84 onderaan) en 89.
\end{proof}

The triangle inequality for $W_{2}$ given in Definition \ref{W2} can now be proven.

\begin{proposition}
\label{driehoek}Consider any $k_{1},...,k_{n}\in A$, and let $W_{2}$ be the
associated Wasserstein distance on $\mathfrak{F}(A)$. Then
\[
W_{2}(\mu,\xi)\leq W_{2}(\mu,\nu)+W_{2}(\nu,\xi)
\]
for all $\mu,\nu,\xi\in\mathfrak{F}(A)$.
%Skrif bl. 91.
\end{proposition}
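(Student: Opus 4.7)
My plan is to fix modular transport plans $\omega \in T_\sigma(\mu,\nu)$ and $\psi \in T_\sigma(\nu,\xi)$, form their composition $\varphi = \omega \circ \psi$, and deduce the triangle inequality from the cost-inequality $I(\varphi)^{1/2} \leq I(\omega)^{1/2} + I(\psi)^{1/2}$ by taking infima. The first thing to check is that $\varphi$ is again modular, i.e.\ $\varphi \in T_\sigma(\mu,\xi)$. Using the characterisation $E_\omega \circ \sigma_t^\mu = \sigma_t^\nu \circ E_\omega$ (and the analogous identity for $\psi$) of modularity recalled after Definition \ref{balans}, together with the composition rule $E_\varphi = E_\psi \circ E_\omega$, one obtains at once
\[
E_\varphi \circ \sigma_t^\mu = E_\psi \circ \sigma_t^\nu \circ E_\omega = \sigma_t^\xi \circ E_\varphi,
\]
which is exactly what is required.

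For the cost inequality, I would work in the relative tensor product $H = H_\omega \otimes_\nu H_\psi$ with its cyclic vector $\Phi = \Omega \otimes_\nu \Psi$ for $\varphi$. By Lemma \ref{inbed2}, $\pi_\mu^\varphi(k_l)\Phi = \iota_\omega(\pi_\mu^\omega(k_l)\Omega)$ and $\pi_\xi^\varphi(k_l)\Phi = \iota_\psi(\pi_\xi^\psi(k_l)\Psi)$; inserting the common middle term $\iota_\omega(\pi_\nu^\omega(k_l)\Omega) = \iota_\psi(\pi_\nu^\psi(k_l)\Psi)$ supplied by Lemma \ref{inbed} and telescoping gives
\[
\pi_\mu^\varphi(k_l)\Phi - \pi_\xi^\varphi(k_l)\Phi = \iota_\omega(v_l) + \iota_\psi(w_l),
\]
where $v_l = \pi_\mu^\omega(k_l)\Omega - \pi_\nu^\omega(k_l)\Omega$ and $w_l = \pi_\nu^\psi(k_l)\Psi - \pi_\xi^\psi(k_l)\Psi$. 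Granted that $\iota_\omega$ and $\iota_\psi$ are isometric on the vectors at hand---a standard feature of the relative tensor product, established by the same $L_\nu$-type computation as in the proof of Lemma \ref{inbed}---applying the triangle inequality in $H$ term by term and then Minkowski in $\bigoplus_{l=1}^n H$ yields
\[
I(\varphi)^{1/2} \leq \Bigl(\sum_l \|v_l\|_\omega^2\Bigr)^{1/2} + \Bigl(\sum_l \|w_l\|_\psi^2\Bigr)^{1/2} = I(\omega)^{1/2} + I(\psi)^{1/2}.
\]
Because $\omega \circ \psi$ lies in $T_\sigma(\mu,\xi)$, taking $\inf$ over $T_\sigma(\mu,\xi)$ on the left and the independent infima over $\omega \in T_\sigma(\mu,\nu)$ and $\psi \in T_\sigma(\nu,\xi)$ on the right gives $W_2(\mu,\xi) \leq W_2(\mu,\nu) + W_2(\nu,\xi)$.

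The delicate point I expect is the isometry claim used above. Lemma \ref{inbed} directly handles only vectors of the form $\pi_\nu^\omega(b)\Omega \in H_\nu^\omega$, whereas $\pi_\mu^\omega(k_l)\Omega$ will in general sit outside $H_\nu^\omega$; one therefore has to appeal to the general left/right $\nu$-boundedness framework of \cite[Section IX.3]{T2}. The vector $\Psi$ is left $\nu$-bounded as a GNS cyclic vector (and symmetrically $\Omega$ is right $\nu$-bounded), which is exactly what is needed to ensure that $x \mapsto x \otimes_\nu \Psi$ and $y \mapsto \Omega \otimes_\nu y$ extend to isometries of $H_\omega$ and $H_\psi$ into $H$. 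Once this is checked, the remainder of the argument is just telescoping and Minkowski, with the relative tensor product doing the real geometric work.
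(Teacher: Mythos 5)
Your proof follows essentially the same route as the paper's: compose the modular transport plans via $E_{\varphi}=E_{\psi}\circ E_{\omega}$, verify modularity of the composition through the intertwining with the modular groups, and obtain the cost inequality by telescoping through the common middle vector supplied by Lemma \ref{inbed} inside the relative tensor product, using Lemma \ref{inbed2} and the isometry of the imbeddings before taking infima. The ``delicate point'' you flag about $\iota_{\omega}$ being isometric on all of $H_{\omega}$ (not just on $H_{\nu}^{\omega}$) is indeed glossed over in the paper, which simply asserts that the imbeddings preserve inner products, so your appeal to the $\nu$-boundedness framework of \cite[Section IX.3]{T2} is a sensible way to justify that step.
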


\begin{proof}
For $\omega\in T_{\sigma}(\mu,\nu)$ and $\psi\in T_{\sigma}(\nu,\xi)$ in terms
of Definition \ref{modOord}, we set $\varphi=\omega\circ\psi$. Note that
$\varphi\in T_{\sigma}(\mu,\xi)$, since
\[
E_{\varphi}\circ\sigma^{\mu}_t
=E_{\psi}\circ E_{\omega}\circ\sigma^{\mu}_t
=E_{\psi}\circ\sigma^{\nu}_t\circ E_{\omega}
=\sigma^{\xi}_t\circ E_{\psi}\circ
E_{\omega}
=\sigma^{\xi}_t\circ E_{\varphi},
\]
hence $(A,\sigma^{\mu},\mu)\varphi\left(  A,\sigma^{\xi},\xi\right)  $ by
\cite[Theorem 4.1]{DS2}. According to Definition \ref{I}, and applying
$\iota_{\omega}$ and $\iota_{\psi}$ componentwise to elements of direct sums,
we have
\begin{align*}
I(\varphi)^{1/2} &  =\left\|  \pi_{\mu}^{\varphi}(k)\Phi-\pi_{\xi}^{\varphi
}(k)\Phi\right\|  _{\oplus\varphi}\\
&  =\left\|  \iota_{\omega}(\pi_{\mu}^{\omega}(k)\Omega)-\iota_{\psi}(\pi
_{\xi}^{\psi}(k)\Psi)\right\|  _{\oplus H}\\
&  \leq\left\|  \iota_{\omega}(\pi_{\mu}^{\omega}(k)\Omega)-\iota_{\omega}
(\pi_{\nu}^{\omega}(k)\Omega)\right\|  _{\oplus H}\\
&  +\left\|  \iota_{\omega}(\pi_{\nu}^{\omega}(k)\Omega)-\iota_{\psi}(\pi
_{\nu}^{\psi}(k)\Psi)\right\|  _{\oplus H}\\
&  +\left\|  \iota_{\psi}(\pi_{\nu}^{\psi}(k)\Psi)-\iota_{\psi}(\pi_{\xi
}^{\psi}(k)\Psi)\right\|  _{\oplus H}\\
&  =\left\|  \pi_{\mu}^{\omega}(k)\Omega - \pi_{\nu}^{\omega}(k)\Omega\right\|
_{\oplus\omega}+\left\|  \pi_{\nu}^{\psi}(k)\Psi-\pi_{\xi}^{\psi}
(k)\Psi\right\|  _{\oplus\psi}\\
&  =I(\omega)^{1/2}+I(\psi)^{1/2}
\end{align*}
where we employed the triangle inequality in 
$\left(\bigoplus_{l=1}^{n}H,\left\|\cdot\right\|_{\oplus H}\right)$, applied Lemmas \ref{inbed} and \ref{inbed2}, and used the fact that
$\iota_{\omega}$ and $\iota_{\psi}$ preserve the inner products. Now take the
infimum on the left over all of $T_{\sigma}(\mu,\xi)$, which includes the
compositions $\omega\circ\psi$ for all $\omega\in T_{\sigma}(\mu,\nu)$ and
$\psi\in T_{\sigma}(\nu,\psi)$, followed in turn by the infima over all
$\omega\in T_{\sigma}(\mu,\nu)$ and $\psi\in T_{\sigma}(\nu,\psi)$ on the right.
\end{proof}

Note that the middle term in the application of triangle inequality in
$\bigoplus_{l=1}^{n}H$ above is zero because we use the relative
tensor product $H=H_{\omega}\otimes_{\nu}H_{\psi}$ containing $H_{\varphi}$.
Exactly this point of the triangle inequality tends to run astray in other
transport plan based approaches to the Wasserstein distance. See in particular
\cite{dePT}, which in turn built on \cite{GMP}; also see \cite[Footnote
4]{GP2}.

The triangle inequality actually still works if one were to use all transport
plans, rather than just modular transport plans, a point which we come back to
in the final section.

\section{Symmetry}

We now proceed to prove that $W_{2}$ is symmetric. This is where the modular
property of our allowed transport plans (Definition \ref{modOord}) becomes important.

For clarity, in this section we initially work with two von Neumann algebras
$A$ and $B$ with faithful normal states $\mu$ and $\nu$ respectively, again
using the notation and conventions from Section \ref{AfdOordPlan}. After
setting up some general facts in this setting, we return to a single von
Neumann algebra when proving that $W_{2}$ is symmetric.

A suitable path to symmetry is provided by KMS-duals. Such duals and
KMS-symmetry were studied and applied in \cite{Pet, OPet, GL93, GL, C, FR,
DS2}. As in Section \ref{AfdOordPlan} we define $j_{\mu}:=J_{\mu}(\cdot
)^{\ast}J_{\mu}$ on $\mathcal{B}(G_{\mu})$ in terms of the modular conjugation
$J_{\mu}$ for $A$ associated to $\Lambda_{\mu}\in G_{\mu}$.

\begin{definition}
\label{KMS-duaal}Given a u.c.p. map $E:A\rightarrow B$ such that $\nu\circ
E=\mu$, we define its \emph{KMS-dual} (w.r.t. $\mu$ and $\nu$) as
\[
E^{\sigma}:=j_{\mu}\circ E^{\prime}\circ j_{\nu}:B\rightarrow A
\]
in terms of the dual $E^{\prime}:B^{\prime}\rightarrow A^{\prime}$ of $E$
defined by
\[
\left\langle \Lambda_{\mu},aE^{\prime}(b^{\prime})\Lambda_{\mu}\right\rangle
=\left\langle \Lambda_{\nu},E(a)b^{\prime}\Lambda_{\nu}\right\rangle
\]
for all $a\in A$ and $b^{\prime}=B^{\prime}$. (Regarding $E^{\prime}$, see
\cite{AC}, and \cite[Section 2]{DS2} for a summary).
\end{definition}

The basic properties of the KMS-dual needed here are set out in the next lemma.

\begin{lemma}
\label{KMS-dEiensk}In Definition \ref{KMS-duaal}, $E^{\sigma}$ is u.c.p. and
$\mu\circ E^{\sigma}=\nu$. Furthermore, $(E^{\sigma})^{\sigma}=E$. The map $E$
has a Hilbert space representation as a contraction $K:G_{\mu}\rightarrow
G_{\nu}$ defined through $Ka\Lambda_{\mu}=E(a)\Lambda_{\nu} $ for all $a\in
A$, and $E^{\sigma}$ is similarly represented by the contraction 
$J_\mu K^*J_\nu:G_\nu\rightarrow G_\mu$.
Consequently, the following equivalence holds:
\[
\mu(aE^{\sigma}(b))=\nu(E(a)b)
\]
for all $a\in A$ and $b\in B$, if and only if
\[
J_{\nu}K=KJ_{\mu}.
\]
%Sien ook skrif bl. 122-123.
\end{lemma}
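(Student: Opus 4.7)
The plan is to put everything on a Hilbert space footing: once the contraction $K$ is built and $J_\mu K^\ast J_\nu$ is identified as the Hilbert space representative of $E^\sigma$, the remaining assertions reduce to short Tomita-Takesaki calculations, so the bulk of the work is front-loaded. First I would define $K$ on the dense subspace $A\Lambda_\mu$ by $Ka\Lambda_\mu := E(a)\Lambda_\nu$; well-definedness and contractivity both follow from Kadison-Schwarz (valid since $E$ is u.c.p.) together with $\nu\circ E = \mu$:
\[
\|E(a)\Lambda_\nu\|^2 = \nu(E(a)^\ast E(a)) \leq \nu(E(a^\ast a)) = \mu(a^\ast a) = \|a\Lambda_\mu\|^2.
\]
To exhibit $E^\sigma$ at Hilbert space level I would first compute $K^\ast$ on $B'\Lambda_\nu$: by the defining property of $E'$,
\[
\langle K^\ast b'\Lambda_\nu, a\Lambda_\mu\rangle = \langle b'\Lambda_\nu, E(a)\Lambda_\nu\rangle = \langle \Lambda_\mu, aE'(b'^\ast)\Lambda_\mu\rangle,
\]
so $K^\ast b'\Lambda_\nu = E'(b'^\ast)^\ast\Lambda_\mu$. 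Substituting $b' = J_\nu b^\ast J_\nu \in B'$ and using $J_\mu\Lambda_\mu = \Lambda_\mu$ with $J_\mu^2 = 1$ then turns $J_\mu K^\ast J_\nu b\Lambda_\nu$ into $j_\mu(E'(j_\nu(b)))\Lambda_\mu = E^\sigma(b)\Lambda_\mu$, which is the desired representation.

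The remaining claims then fall out almost for free. Unitality of $E^\sigma$ reduces to $E'(1) = 1$, which is immediate from $\langle\Lambda_\mu, aE'(1)\Lambda_\mu\rangle = \nu(E(a)) = \mu(a)$, together with $j_\mu(1) = 1$. The identity $\mu\circ E^\sigma = \nu$ unwinds through the three one-line Tomita-Takesaki identities $\mu\circ j_\mu = \mu'$ on $A'$, $\mu'\circ E' = \nu'$ on $B'$, and $\nu'\circ j_\nu = \nu$ on $B$, each of which only uses $J_\mu\Lambda_\mu = \Lambda_\mu$ and $J_\nu\Lambda_\nu = \Lambda_\nu$. Complete positivity of $E^\sigma$ is the one step I would not try to read off from the Hilbert space picture directly, since $j_\mu$ and $j_\nu$ are $\ast$-anti-isomorphisms and hence individually fail to be CP; here I would invoke the standard theory of the noncommutative dual and KMS-dual as developed in \cite{AC} and summarised in \cite[Section 2]{DS2}, which precisely handles the cancellation of the two ``transpositions'' around the CP map $E'$. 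The involutive identity $(E^\sigma)^\sigma = E$ is cleanest back in the Hilbert space picture: the contraction representing $(E^\sigma)^\sigma$ is $J_\nu(J_\mu K^\ast J_\nu)^\ast J_\mu = J_\nu J_\nu K J_\mu J_\mu = K$, and since $\Lambda_\nu$ is separating, this representative determines the map uniquely.

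For the concluding equivalence, I would rewrite each side using the representations: on the one hand
\[
\mu(aE^\sigma(b)) = \langle a^\ast\Lambda_\mu, J_\mu K^\ast J_\nu b\Lambda_\nu\rangle,
\]
and on the other, using $E(a^\ast) = E(a)^\ast$,
\[
\nu(E(a)b) = \langle E(a^\ast)\Lambda_\nu, b\Lambda_\nu\rangle = \langle Ka^\ast\Lambda_\mu, b\Lambda_\nu\rangle = \langle a^\ast\Lambda_\mu, K^\ast b\Lambda_\nu\rangle.
\]
Equality for all $a \in A$ and $b \in B$ then forces $J_\mu K^\ast J_\nu = K^\ast$ on the dense subspace $B\Lambda_\nu$ of $G_\nu$, hence everywhere; taking the adjoint and using $J_\mu^2 = J_\nu^2 = 1$ rewrites this precisely as $J_\nu K = K J_\mu$. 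The main obstacle of the whole argument is, as flagged, the complete positivity of $E^\sigma$, which I would route through the established theory of the noncommutative dual rather than attempt a ground-up matrix verification.
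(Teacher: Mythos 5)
Your proposal is correct and follows essentially the same route as the paper: construct $K$ via the Kadison--Schwarz inequality, identify $K^{\ast}$ as the representative of $E^{\prime}$ and hence $J_{\mu}K^{\ast}J_{\nu}$ as that of $E^{\sigma}$, outsource complete positivity of the dual to \cite{AC} and \cite{DS2}, and reduce the final equivalence to a density argument. The only cosmetic difference is that you obtain $(E^{\sigma})^{\sigma}=E$ from the Hilbert-space representatives rather than from $(E^{\prime})^{\prime}=E$ as the paper does, and you spell out the ``routine manipulations'' the paper leaves implicit.
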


\begin{proof}
Note that $E^{\prime}$ is u.c.p. and satisfies $\mu^{\prime}\circ E^{\prime
}=\nu^{\prime}$ according to \cite[Proposition 3.1]{AC}. The corresponding
properties for $E^{\sigma}$ then follow easily, while $(E^{\sigma})^{\sigma
}=E$ results from $(E^{\prime})^{\prime}=E$. That $K$ is well-defined, follows
from $\nu\circ E=\mu$ (which implies that $\nu(E(a)^{\ast}E(a))\leq
\nu(E(a^{\ast}a))=\mu(a^{\ast}a)$ by Kadison's inequality).
%Skrif bl. 110.
This implies that $E^{\prime}$ is represented by $K^{\ast}$, since
\[
\left\langle 
E'(b')\Lambda_{\mu},a\Lambda_{\mu}
\right\rangle
=
\left\langle 
K^*b'\Lambda_{\nu},a\Lambda_{\mu}
\right\rangle
\]
by the definition of $E^{\prime}$, which in turn means that $E^{\sigma}$ is
represented by $J_{\mu}K^{\ast}J_{\nu}$ by Definition \ref{KMS-duaal}.
%Bl. 135.
The mentioned equivalence now follows by routine manipulations.
\end{proof}

This allows us to prove the following lemma in terms of the modular transport
plans from Definition \ref{modOord}, which subsequently leads to symmetry of
$W_{2}$.

\begin{lemma}
\label{algSim}For faithful normal states $\mu$ and $\nu$ on von Neumann
algebras $A$ and $B$ respectively, we have
\[
\left\|  \pi_{\mu}^{\omega}(a)\Omega_{\omega}-\pi_{\nu}^{\omega}%
(b)\Omega_{\omega}\right\|  _{\omega}=\left\|  \pi_{\nu}^{\omega^{\sigma}%
}(b)\Omega_{\omega^{\sigma}}-\pi_{\mu}^{\omega^{\sigma}}(a)\Omega
_{\omega^{\sigma}}\right\|  _{\omega^{\sigma}}%
\]
for every $\omega\in T_{\sigma}(\mu,\nu)$, where $\omega^{\sigma}\in
T_{\sigma}(\nu,\mu)$ is determined by $E_{\omega^{\sigma}}=E_{\omega}^{\sigma
}$. For emphasis we have written $\Omega_{\omega}$ for the cyclic vector
appearing in a cyclic representation associated to $\omega$, and similarly for
$\omega^{\sigma}$.
%Skrif bl. 124, maar daar voeg ek inf by wat nie na Prop sim hieronder lei nie.
\end{lemma}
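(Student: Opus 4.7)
The plan is to expand both sides via formula (\ref{normVsE}) and match the resulting scalar expressions term by term, with the nontrivial part supplied by the KMS-symmetry criterion in Lemma \ref{KMS-dEiensk} and the $*$-preservation of $E_{\omega}$. Before doing this expansion I would verify that $\omega^{\sigma}\in T_{\sigma}(\nu,\mu)$ is well defined: Lemma \ref{KMS-dEiensk} ensures $E_{\omega}^{\sigma}$ is u.c.p.\ with $\mu\circ E_{\omega}^{\sigma}=\nu$, so the Choi--Jamio\l{}kowski correspondence of Section \ref{AfdOordPlan} produces the plan, while modularity $E_{\omega}^{\sigma}\circ\sigma_{t}^{\nu}=\sigma_{t}^{\mu}\circ E_{\omega}^{\sigma}$ follows from that of $E_{\omega}$ by unpacking $E^{\sigma}=j_{\mu}\circ E'\circ j_{\nu}$ and using the intertwining of $j_{\mu},j_{\nu}$ with the modular groups of $A,B$ and of $A',B'$.

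Applying (\ref{normVsE}) then gives on the left
\[
\|\pi_{\mu}^{\omega}(a)\Omega_{\omega}-\pi_{\nu}^{\omega}(b)\Omega_{\omega}\|_{\omega}^{2}=\mu(a^{*}a)+\nu(b^{*}b)-\nu(E_{\omega}(a)^{*}b)-\nu(b^{*}E_{\omega}(a)),
\]
and on the right, with the roles of the two states and of $a,b$ swapped and with $E_{\omega}$ replaced by $E_{\omega}^{\sigma}$,
\[
\|\pi_{\nu}^{\omega^{\sigma}}(b)\Omega_{\omega^{\sigma}}-\pi_{\mu}^{\omega^{\sigma}}(a)\Omega_{\omega^{\sigma}}\|_{\omega^{\sigma}}^{2}=\nu(b^{*}b)+\mu(a^{*}a)-\mu(E_{\omega}^{\sigma}(b)^{*}a)-\mu(a^{*}E_{\omega}^{\sigma}(b)).
\]
The diagonal terms $\mu(a^{*}a)$ and $\nu(b^{*}b)$ match automatically, so the problem reduces to the cross-term identity. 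Here I would invoke Lemma \ref{KMS-dEiensk} to obtain $\nu(E_{\omega}(a')b')=\mu(a'E_{\omega}^{\sigma}(b'))$ for all $a'\in A$ and $b'\in B$. Setting $a'=a^{*}$ and $b'=b$ and using $E_{\omega}(a^{*})=E_{\omega}(a)^{*}$ (which holds because u.c.p.\ maps are $*$-preserving) yields $\nu(E_{\omega}(a)^{*}b)=\mu(a^{*}E_{\omega}^{\sigma}(b))$; complex conjugation gives the companion identity $\nu(b^{*}E_{\omega}(a))=\mu(E_{\omega}^{\sigma}(b)^{*}a)$, and summing the two closes the argument.

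The real obstacle behind this otherwise formal computation is verifying the hypothesis of the equivalence in Lemma \ref{KMS-dEiensk}, namely $J_{\nu}K=KJ_{\mu}$ for the Hilbert-space contraction $K$ representing $E_{\omega}$. Modularity of $\omega$ delivers the commutation $K\Delta_{\mu}^{it}=\Delta_{\nu}^{it}K$, and promoting this to an intertwining of the modular conjugations requires the polar decomposition $S=J\Delta^{1/2}$ together with analytic continuation on a suitable core of modular-analytic elements. This is a standard but nontrivial fact from Tomita--Takesaki theory, and is exactly the step that links modularity of transport plans to the symmetry of $W_{2}$.
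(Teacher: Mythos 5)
Your proposal is correct and follows essentially the same route as the paper: expand both sides via (\ref{normVsE}), match the cross terms using the equivalence in Lemma \ref{KMS-dEiensk}, and obtain its hypothesis $J_{\nu}K=KJ_{\mu}$ from the modular intertwining $\Delta_{\nu}^{it}K=K\Delta_{\mu}^{it}$ supplied by modularity of $\omega$. The only (immaterial) divergence is in checking $\omega^{\sigma}\in T_{\sigma}(\nu,\mu)$, where the paper uses contravariance of the KMS-dual under composition together with KMS-symmetry of the modular group rather than unpacking $j_{\mu}\circ E'\circ j_{\nu}$ directly.
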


\begin{proof}
Since $\omega$ is modular, for the Hilbert space representation $K_{\omega} $
of $E_{\omega}$ we have $\Delta_{\nu}^{it}K_{\omega}=K_{\omega}\Delta_{\mu
}^{it}$ by \cite[Theorem 4.1]{DS2}, $\Delta_{\mu}$ and $\Delta_{\nu}$ being
the modular operators associated to $\Lambda_{\mu}$ and $\Lambda_{\nu}$
respectively.
%Skrif bl. 121, Prop 59.
Consequently $J_{\nu}K_{\omega}=K_{\omega}J_{\mu}$. From (\ref{normVsE}) and
Lemma \ref{KMS-dEiensk}, it then follows that%
\begin{align*}
\left\|  \pi_{\mu}^{\omega}(a)\Omega_{\omega}-\pi_{\nu}^{\omega}%
(b)\Omega_{\omega}\right\|  _{\omega}^{2} &  =\mu(a^{\ast}a)+\nu(b^{\ast
}b)-\nu(E_{\omega}(a)^{\ast}b)-\nu(b^{\ast}E_{\omega}(a))\\
&  =\nu(b^{\ast}b)+\mu(a^{\ast}a)-\mu(E_{\omega}^{\sigma}(b)^{\ast}
a)-\mu(a^{\ast}E_{\omega}^{\sigma}(b))\\
&  =\left\|  \pi_{\nu}^{\omega^{\sigma}}(b)\Omega_{\omega^{\sigma}}-\pi_{\mu
}^{\omega^{\sigma}}(a)\Omega_{\omega^{\sigma}}\right\|  _{\omega^{\sigma}}^{2}
\end{align*}
where 
$\omega^{\sigma}\in T(\nu,\mu)$ 
is determined by $E_{\omega^{\sigma}
}=E_{\omega}^{\sigma}$ according to \cite[Section 4]{DS2}. Note that
$\omega^{\sigma}\in T_{\sigma}(\nu,\mu)$ by \cite[Theorem 4.1]{DS2}, since
$(\sigma_{t}^{\mu})^{\sigma}
=\sigma_{-t}^{\mu}$, 
so 
$E_{\omega^{\sigma}}\circ\sigma_{t}^{\nu}
=(\sigma_{-t}^{\nu}\circ E_{\omega})^{\sigma}
=(E_{\omega}\circ\sigma_{-t}^{\mu})^{\sigma}
=\sigma_{t}^{\mu}\circ E_{\omega^{\sigma}}$.
%Skrif bl. 136 en 137. Hier was 'n fout, maar dis nou reggestel.
\end{proof}

In particular we have symmetry of $W_{2}$:

\begin{proposition}
\label{sim}Consider any $k_{1},...,k_{n}\in A$, and let $W_{2}$ be the
associated Wasserstein distance on $\mathfrak{F}(A)$. Then%
\[
W_{2}(\mu,\nu)=W_{2}(\nu,\mu)
\]
for all $\mu,\nu\in\mathfrak{F}(A)$.
\end{proposition}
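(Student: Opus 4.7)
The plan is to reduce symmetry directly to Lemma \ref{algSim} applied componentwise to the generators $k_1,\ldots,k_n$, combined with the observation (from Lemma \ref{KMS-dEiensk}) that the KMS-dual operation $\omega\mapsto\omega^\sigma$ is an involution, so that it sets up a bijection $T_\sigma(\mu,\nu)\leftrightarrow T_\sigma(\nu,\mu)$ under which the transport cost is preserved.

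Concretely, I would take $\omega\in T_\sigma(\mu,\nu)$, apply Lemma \ref{algSim} with $a=b=k_l$ for each $l$, and sum the resulting equalities of squared norms. Since $\|x-y\|=\|y-x\|$ in a Hilbert space, the right-hand side reassembles (via Definition \ref{I}) into the transport cost $I(\omega^\sigma)$ computed in the direction $\nu\to\mu$, giving
\[
I(\omega)=\sum_{l=1}^{n}\left\|\pi_\mu^\omega(k_l)\Omega_\omega-\pi_\nu^\omega(k_l)\Omega_\omega\right\|_\omega^{2}=\sum_{l=1}^{n}\left\|\pi_\nu^{\omega^\sigma}(k_l)\Omega_{\omega^\sigma}-\pi_\mu^{\omega^\sigma}(k_l)\Omega_{\omega^\sigma}\right\|_{\omega^\sigma}^{2}=I(\omega^\sigma).
\]
By Lemma \ref{algSim} we also know $\omega^\sigma\in T_\sigma(\nu,\mu)$, and by the identity $(E^\sigma)^\sigma=E$ from Lemma \ref{KMS-dEiensk} the assignment $\omega\mapsto\omega^\sigma$ is its own inverse, hence a bijection between $T_\sigma(\mu,\nu)$ and $T_\sigma(\nu,\mu)$.

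Taking the infimum of $I(\omega)^{1/2}$ over $\omega\in T_\sigma(\mu,\nu)$ on the left and reindexing the right-hand side by $\omega^\sigma\in T_\sigma(\nu,\mu)$ then yields $W_2(\mu,\nu)=W_2(\nu,\mu)$, in view of Definition \ref{W2}. There is no genuine obstacle: all the analytic content — in particular the modular-group compatibility needed to get $J_\nu K=KJ_\mu$, which is what powers Lemma \ref{algSim} — has already been absorbed into the previous two lemmas, and the present statement is essentially a bookkeeping corollary of the involution property of the KMS-dual together with the manifest symmetry of the Hilbert-space norm in each summand of $I$.
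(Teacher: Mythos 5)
Your argument is correct and is essentially identical to the paper's own proof: both apply Lemma \ref{algSim} termwise with $a=b=k_l$ to get $I(\omega)=I(\omega^{\sigma})$, and both use $(E_\omega^{\sigma})^{\sigma}=E_\omega$ to see that $\omega\mapsto\omega^{\sigma}$ is a bijection between $T_{\sigma}(\mu,\nu)$ and $T_{\sigma}(\nu,\mu)$, so the infima coincide.
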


\begin{proof}
This follows from Definition \ref{W2} of $W_{2}$ and Lemma \ref{algSim}, since
for each $\omega$, every term in $I(\omega)$ (see (\ref{Iterme})) is equal to
the corresponding term in $I(\omega^{\sigma})$, while $(\omega^{\sigma
})^{\sigma}=\omega$ because of $(E_{\omega}^{\sigma})^{\sigma}=E_{\omega}$,
giving a one-to-one correspondence between $T_{\sigma}(\mu,\nu)$ and
$T_{\sigma}(\nu,\mu)$, which means we retain equality in the infima over
$T_{\sigma}(\mu,\nu)$ and $T_{\sigma}(\nu,\mu)$ respectively on the two
sides.
%Hierdie bewys verskyn nie in my skrif nie. 
%Die bewys op skrif bl. 126 bo-aan bevat 'n gaping.
\end{proof}

%\begin{remark}
%A dual that plays the role of $E^{\sigma}$ when $J_{\nu}K=KJ_{\mu}$, can also
%be obtained via \cite[Lemma 3.2.19]{BR1}, which in some ways allows for more
%generality, but it leads to no more than Proposition \ref{sim} in our setup.
%\end{remark} Bl. 110-.

\section{$W_{2}$ is a metric}\label{metriek}

So far in our proof that $W_{2}$ is a metric, we have not assumed that $A$ is
finitely generated. This will shortly become necessary. However, we postpone
making this assumption until it becomes essential.

One of the remaining properties of a metric is the following:

\begin{proposition}
\label{W2(mu,mu)=0}For $W_{2}$ in Definition \ref{W2}, we have%
\[
W_{2}(\mu,\mu)=0
\]
for all $\mu\in\mathfrak{F}(A)$.
\end{proposition}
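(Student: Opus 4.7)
The plan is to exhibit an explicit modular transport plan in $T_{\sigma}(\mu,\mu)$ whose transport cost is already zero, so that the infimum in Definition \ref{W2} is automatically $0$ (it cannot be negative, since $I \geq 0$).

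The natural candidate is the transport plan $\delta_{\mu}$ constructed in Section \ref{AfdOordPlan}, namely $\delta_{\mu} = \langle \Lambda_{\mu}, \varpi_{A}(\cdot)\Lambda_{\mu}\rangle$ on $A\odot A'$. It was already noted there that $\delta_{\mu}\in T(\mu,\mu)$. The corresponding u.c.p. map under the bijection (\ref{E}) is $E_{\delta_{\mu}}=\mathrm{id}_{A}$, since $\delta_{\mu}(a\otimes b') = \langle \Lambda_{\mu}, ab'\Lambda_{\mu}\rangle = \delta_{\mu}(\mathrm{id}_{A}(a)\otimes b')$ for all $a\in A$ and $b'\in A'$, and the correspondence is unique. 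Consequently $E_{\delta_{\mu}}\circ\sigma_{t}^{\mu}=\sigma_{t}^{\mu}\circ E_{\delta_{\mu}}$ trivially, so by the characterization of modularity via $E_{\omega}\circ\alpha=\beta\circ E_{\omega}$ (from \cite[Theorem 4.1]{DS2}) we have $\delta_{\mu}\in T_{\sigma}(\mu,\mu)$.

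Now I would apply formula (\ref{Ialt}) to $\omega=\delta_{\mu}$ with $\nu=\mu$ and $E_{\omega}=\mathrm{id}_{A}$. Each summand becomes
\[
\mu(k_{l}^{\ast}k_{l})+\mu(k_{l}^{\ast}k_{l})-\mu(k_{l}^{\ast}k_{l})-\mu(k_{l}^{\ast}k_{l})=0,
\]
so $I(\delta_{\mu})=0$, and therefore $0\leq W_{2}(\mu,\mu)\leq I(\delta_{\mu})^{1/2}=0$. Equivalently, one may argue directly on the Hilbert space side: for $\delta_{\mu}$ one may take $H_{\omega}=G_{\mu}$, $\Omega=\Lambda_{\mu}$, and verify $u_{\mu}=\mathrm{id}$, whence $\pi_{\mu}^{\delta_{\mu}}(k_{l})\Omega=k_{l}\Lambda_{\mu}=\pi_{\mu}^{\delta_{\mu}}(k_{l})\Omega$ (in the roles of $\pi_{\mu}^{\omega}$ and $\pi_{\nu}^{\omega}$), so each term in (\ref{Iterme}) vanishes.

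There is no real obstacle here: everything follows from the existence of the diagonal transport plan $\delta_{\mu}$ and the fact that the identity map on $A$ commutes with the modular group. The proposition is essentially a sanity check that the cost of transport and the set $T_{\sigma}$ have been chosen so that a state can be transported to itself at zero cost.
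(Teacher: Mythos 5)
Your proposal is correct and follows exactly the paper's own argument: take $\omega=\delta_{\mu}$, note that $E_{\delta_{\mu}}=\mathrm{id}_{A}$ trivially commutes with $\sigma_{t}^{\mu}$ so that $\delta_{\mu}\in T_{\sigma}(\mu,\mu)$, and read off $I(\delta_{\mu})=0$ from (\ref{Ialt}). The additional Hilbert-space verification via (\ref{Iterme}) is a harmless (and correct) alternative check that the paper does not include.
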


\begin{proof}
This follows directly from (\ref{Ialt}) by setting $\omega=\delta_{\mu}\in
T_{\sigma}(\mu,\mu)$, i.e. $E_{\omega}=\operatorname*{id}_{A}$, which is
obviously a modular transport plan, since trivially $E_{\omega}\circ\sigma
_{t}^{\mu}=\sigma_{t}^{\mu}\circ E_{\omega}$.
%Skrif bl. 137 en bl. 107.
\end{proof}

Aiming to achieve this simple result, was in fact the initial hint to approach
the cost $I$ heuristically through (\ref{heuristieseKoste}) and consequently
rigorously by Definition \ref{I}.

All that remains, is to show that $W_{2}(\mu,\nu)=0$ implies $\mu=\nu$. The
first of two lemmas towards this, is the following:

\begin{lemma}
\label{minOP}Consider any $k_{1},...,k_{n}\in A$, and let $W_{2}$ be the
associated Wasserstein distance on $\mathfrak{F}(A)$. For any $\mu,\nu
\in\mathfrak{F}(A)$, there then exists a modular transport plan $\omega\in
T_{\sigma}(\mu,\nu)$ such that
\[
W_{2}(\mu,\nu)=I(\omega)^{1/2}.
\]
\end{lemma}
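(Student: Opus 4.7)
The plan is to prove existence of a minimizer by a standard compactness-plus-continuity argument, conducted in the Choi-Jamiolkowski picture. Using the bijection $\omega\leftrightarrow E_\omega$ of Section \ref{AfdOordPlan}, the task reduces to showing that the infimum of the cost is attained on the set $\mathcal{E}_\sigma$ of normal u.c.p.\ maps $E:A\to A$ satisfying $\nu\circ E=\mu$ and $E\circ\sigma_t^\mu=\sigma_t^\nu\circ E$ for all $t\in\mathbb{R}$, with the cost viewed through the alternative formula (\ref{Ialt}) as a function of $E$.

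For compactness, I would equip the space of linear maps $A\to A$ with the topology of pointwise $\sigma$-weak convergence. Since every $E\in\mathcal{E}_\sigma$ is contractive, $\mathcal{E}_\sigma$ sits inside the product $\prod_{a\in A}\{b\in A:\|b\|\le\|a\|\}$, which is compact by Banach-Alaoglu applied to each $\sigma$-weakly compact ball together with Tychonoff's theorem. Closedness of $\mathcal{E}_\sigma$ in this product comes down to checking that linearity, complete positivity, unitality, the marginal identity $\nu\circ E=\mu$, and the modular intertwining $E\circ\sigma_t^\mu=\sigma_t^\nu\circ E$ all pass to pointwise $\sigma$-weak limits, using normality of $\nu$ and $\sigma$-weak continuity of $\sigma_t^\nu$. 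Normality of a limit map $E$ is the only subtle point: given $a_i\nearrow a$ in $A^+$, positivity of $E$ gives $E(a_i)\nearrow c\le E(a)$ $\sigma$-weakly, and then $\nu(c)=\lim_i\mu(a_i)=\mu(a)=\nu(E(a))$ combined with the faithfulness of $\nu$ forces $c=E(a)$.

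For continuity of $I$ on $\mathcal{E}_\sigma$, the expression (\ref{Ialt}) shows that $I(\omega)$ depends on $E_\omega$ only through the scalars $\nu(E_\omega(k_l)^\ast k_l)$ and $\nu(k_l^\ast E_\omega(k_l))$; each of these is the value at $E_\omega(k_l)$ of the vector functional $b\mapsto\langle k_l\Lambda_\nu,b\Lambda_\nu\rangle$ on $A$ (or its conjugate), which is normal. Hence pointwise $\sigma$-weak convergence $E_\alpha\to E$ implies $I(\omega_{E_\alpha})\to I(\omega_E)$. Given a minimizing sequence $\omega_n\in T_\sigma(\mu,\nu)$ with $I(\omega_n)\to W_2(\mu,\nu)^2$, a convergent subnet of the u.c.p.\ maps $E_{\omega_n}$ yields a limit $E\in\mathcal{E}_\sigma$, and the associated modular transport plan $\omega$ satisfies $I(\omega)=W_2(\mu,\nu)^2$. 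The main technical obstacle I anticipate is confirming that normality of the limit map survives the limiting process; as sketched above, this relies essentially on the faithfulness of $\nu$ in combination with the marginal constraint $\nu\circ E=\mu$.
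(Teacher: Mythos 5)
Your argument is correct, but it takes a genuinely different route from the paper's. The paper stays entirely on the transport-plan side: it observes that $T_{\sigma}(\mu,\nu)$ is weak* compact as a set of states on $A\otimes_{\text{max}}A'$, extracts a weak* cluster point of a minimizing sequence, and then must work to show the cost does not jump at the cluster point --- the issue being that $\nu(k_{l}^{\ast}E_{\omega}(k_{l}))=\left\langle k_{l}\Lambda_{\nu},E_{\omega}(k_{l})\Lambda_{\nu}\right\rangle$ is not of the form $\omega(x)$ for an $x\in A\odot A'$, so the paper approximates $k_{l}\Lambda_{\nu}$ by vectors $a_{l}'^{\ast}\Lambda_{\nu}$ with $a_{l}'\in A'$ (using that $\Lambda_{\nu}$ is cyclic for $A'$) and runs an explicit $\varepsilon$-estimate based on (\ref{Ialt}). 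You instead transfer the whole problem to the Choi--Jamiolkowski side, where the point-$\sigma$-weak topology makes the cost manifestly continuous, since (\ref{Ialt}) evaluates $E(k_{l})$ only against the normal vector functional $\left\langle k_{l}\Lambda_{\nu},\cdot\,\Lambda_{\nu}\right\rangle$ and its conjugate; the price you pay is the compactness/closedness verification for the set of maps, whose only delicate point --- normality of the limit map --- you correctly dispose of via faithfulness of $\nu$ together with the marginal constraint $\nu\circ E=\mu$ (an increasing net $a_{i}\nearrow a$ gives $E(a_{i})\nearrow c\leq E(a)$ with $\nu(E(a)-c)=0$). Each approach buys something: the paper's avoids any discussion of which topology on the u.c.p.\ maps corresponds to weak* convergence of states, while yours isolates the continuity of $I$ as a clean, reusable statement and makes the role of faithfulness of $\nu$ (rather than cyclicity of $\Lambda_{\nu}$ for $A'$) explicit. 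Both correctly use that the bijection $\omega\leftrightarrow E_{\omega}$ restricts to one between $T_{\sigma}(\mu,\nu)$ and the intertwining maps, so your conclusion transfers back to a genuine $\omega\in T_{\sigma}(\mu,\nu)$ as required.
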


\begin{proof}
It is a routine exercise to show that $T_{\sigma}(\mu,\nu)$ in Definition
\ref{modOord} is weakly* compact, since without loss we can view each element
of $T_{\sigma}(\mu,\nu)$ as a state on the maximal C*-tensor product
$A\otimes_{\text{max}}B^{\prime}$ (see for example \cite[Proposition 4.1]{D2}).
%Skrif bl. 128-129, Prop 73.
By the definition of $W_{2}$ there is a sequence $\omega_{q}\in T_{\sigma}
(\mu,\nu)$ such that $I(\omega_{q})^{1/2}\rightarrow W_{2}(\mu,\nu) $, which
then must have a weak* cluster point $\omega\in T_{\sigma}(\mu,\nu)$. The
lemma now follows by the following approximation:

Given $\varepsilon>0$, there is a $q_{0}$ such that
\[
\left|  I(\omega_{q})-W_{2}(\mu,\nu)^{2}\right|  <\varepsilon
\]
for all $q>q_{0}$. There exist $a_{1}^{\prime},...,a_{n}^{\prime}\in
A^{\prime}$ such that%
\[
\left\|  k_{l}\Lambda_{\nu}-a_{l}^{\prime\ast}\Lambda_{\nu}\right\|
<\varepsilon
\]
for $l=1,...,n$. Furthermore, there is a $q>q_{0}$ such that
\[
\left|  \omega_{q}(k_{l}\otimes a_{l}^{\prime})-\omega(k_{l}\otimes
a_{l}^{\prime})\right|  <\varepsilon
\]
for $l=1,...,n$. Using (\ref{Ialt}) we then find
\begin{align*}
\left|  I(\omega)-I(\omega_{q})\right|   &  \leq2\sum_{l=1}^{n}\left|
\nu(k_{l}^{\ast}E_{\omega_{q}}(k_{l}))-\nu(k_{l}^{\ast}E_{\omega}
(k_{l}))\right| \\
&  =2\sum_{l=1}^{n}\left|  \left\langle k_{l}\Lambda_{\nu},(E_{\omega_{q}
}(k_{l})-E_{\omega}(k_{l}))\Lambda_{\nu}\right\rangle \right| \\
&  \leq2\sum_{l=1}^{n}\left|  \left\langle k_{l}\Lambda_{\nu}-a_{l}
^{\prime\ast}\Lambda_{\nu},(E_{\omega_{q}}(k_{l})-E_{\omega}(k_{l}
))\Lambda_{\nu}\right\rangle \right| \\
&  +2\sum_{l=1}^{n}\left|  \left\langle \Lambda_{\nu},(E_{\omega_{q}}
(k_{l})-E_{\omega}(k_{l}))a_{l}^{\prime}\Lambda_{\nu}\right\rangle \right| \\
&  \leq4\sum_{l=1}^{n}\left\|  k_{l}\Lambda_{\nu}-a_{l}^{\prime\ast}
\Lambda_{\nu}\right\|  \left\|  k_{l}\right\| \\
&  +2\sum_{l=1}^{n}\left|  \omega_{q}(k_{l}\otimes a_{l}^{\prime}
)-\omega(k_{l}\otimes a_{l}^{\prime})\right| \\
&  <4\varepsilon\sum_{l=1}^{n}\left\|  k_{l}\right\|  +2n\varepsilon.
\end{align*}
Consequently,
\[
\left| I(\omega)-W_2(\mu,\nu)^2 \right|
<
4\varepsilon\sum_{l=1}^{n}\left\| k_l \right\|  
+2n\varepsilon+\varepsilon
\]
for all $\varepsilon>0$.
%Skrif bl. 127-128.
\end{proof}

This lemma is of some independent interest, since it says that the distance
$W_{2}(\mu,\nu)$ is always reached by some modular transport plan. To prove
that $W_{2}$ is a metric, it will be applied to the special case $W_{2}
(\mu,\nu)=0$.

The second lemma is where we see why $A$ is eventually taken to be finitely
generated, as is to be expected given the analogy to $\mathbb{R}^{n}$ in the Introduction.

\begin{lemma}
\label{EvsId}Consider any $k_{1},...,k_{n}\in A$ such that $\{k_{1}^{\ast
},...,k_{n}^{\ast}\}=\{k_{1},...,k_{n}\}$ and let $I$ be the associated
transport cost function. Let $R$ be the von Neumann subalgebra of $A$
generated by $\{k_{1},...,k_{n}\}$. Let $\mu,\nu\in\mathfrak{F}(A)$. If
$I(\omega)=0$ for some transport plan $\omega\in T(\mu,\nu)$, then the
restriction of $E_{\omega}$ to $R$ is the identity map:
\[
E_{\omega}|_{R}=\operatorname*{id}{}_{R}
\]
and consequently $\mu|_{R}=\nu|_{R}$.
%Skrif bl. 64-68, St 31.
\end{lemma}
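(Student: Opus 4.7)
The plan is to first extract from $I(\omega)=0$ the identity $E_{\omega}(k_{l})=k_{l}$ for each generator, then to upgrade this to show that each $k_{l}$ lies in the multiplicative domain of $E_{\omega}$ via Kadison--Schwarz together with faithfulness of $\nu$, and finally to extend by normality and Kaplansky density to conclude $E_{\omega}|_{R}=\operatorname*{id}{}_{R}$. For the first step, since $I(\omega)=0$ every summand in (\ref{Iterme}) vanishes, so $\pi_{\mu}^{\omega}(k_{l})\Omega=\pi_{\nu}^{\omega}(k_{l})\Omega$ for each $l$. Taking norms gives $\mu(k_{l}^{\ast}k_{l})=\nu(k_{l}^{\ast}k_{l})$, a fact that will be needed momentarily. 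Taking inner products with $\pi_{\nu}^{\omega}(b)\Omega$ for arbitrary $b\in A$, and using the identity $\langle\pi_{\mu}^{\omega}(a)\Omega,\pi_{\nu}^{\omega}(b)\Omega\rangle_{\omega}=\nu(E_{\omega}(a)^{\ast}b)$ displayed just before Definition \ref{I}, together with $\langle\pi_{\nu}^{\omega}(k_{l})\Omega,\pi_{\nu}^{\omega}(b)\Omega\rangle_{\omega}=\nu(k_{l}^{\ast}b)$, one gets $\nu((E_{\omega}(k_{l})-k_{l})^{\ast}b)=0$ for all $b\in A$. Setting $b=E_{\omega}(k_{l})-k_{l}$ and invoking faithfulness of $\nu$ then yields $E_{\omega}(k_{l})=k_{l}$.

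For the second step, the Kadison--Schwarz inequality for the u.c.p.\ map $E_{\omega}$ gives $E_{\omega}(k_{l}^{\ast}k_{l})\geq E_{\omega}(k_{l})^{\ast}E_{\omega}(k_{l})=k_{l}^{\ast}k_{l}$, and applying $\nu$ to the nonnegative difference and using $\nu\circ E_{\omega}=\mu$ together with $\mu(k_{l}^{\ast}k_{l})=\nu(k_{l}^{\ast}k_{l})$ shows this difference has $\nu$-expectation zero, hence is zero by faithfulness. Thus $E_{\omega}(k_{l}^{\ast}k_{l})=E_{\omega}(k_{l})^{\ast}E_{\omega}(k_{l})$. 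The hypothesis $\{k_{l}^{\ast}\}=\{k_{l}\}$ lets the same reasoning run for $k_{l}^{\ast}$ in place of $k_{l}$, giving $E_{\omega}(k_{l}k_{l}^{\ast})=E_{\omega}(k_{l})E_{\omega}(k_{l})^{\ast}$ as well, so each $k_{l}$ lies in the multiplicative domain of $E_{\omega}$. Since this multiplicative domain is a $\ast$-subalgebra on which $E_{\omega}$ acts as a $\ast$-homomorphism, $E_{\omega}$ is the identity on the $\ast$-algebra $R_{0}$ generated by $\{k_{1},\dots,k_{n}\}$. Combining normality of $E_{\omega}$ with Kaplansky density (approximating any element of $R$ by a bounded net from $R_{0}$ in the $\sigma$-strong-$\ast$ topology) extends this to $E_{\omega}|_{R}=\operatorname*{id}{}_{R}$, and therefore $\mu|_{R}=(\nu\circ E_{\omega})|_{R}=\nu|_{R}$.

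I expect the main obstacle to be the passage from the linear equality $E_{\omega}(k_{l})=k_{l}$ to full multiplicativity of $E_{\omega}$ at $k_{l}$: this is where Kadison--Schwarz combined with faithfulness of $\nu$ does the crucial work, and it is also precisely here that the self-adjointness assumption $\{k_{l}^{\ast}\}=\{k_{l}\}$ earns its keep, ensuring that both $E_{\omega}(k_{l}^{\ast}k_{l})=k_{l}^{\ast}k_{l}$ and $E_{\omega}(k_{l}k_{l}^{\ast})=k_{l}k_{l}^{\ast}$ can be derived; once this is in place, the remainder is a routine normality-plus-Kaplansky extension.
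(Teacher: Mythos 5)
Your proposal is correct and follows essentially the same route as the paper: extract $E_{\omega}(k_{l})=k_{l}$ and the Schwarz equality $E_{\omega}(k_{l}^{\ast}k_{l})=E_{\omega}(k_{l})^{\ast}E_{\omega}(k_{l})$ from $I(\omega)=0$ using faithfulness of $\nu$, invoke the multiplicative domain (Choi) to get a $\ast$-homomorphism on the generated $\ast$-algebra, and extend by normality. The only cosmetic difference is that the paper packages both facts at once by splitting the $l$-th term of $I(\omega)$ as $\nu\left(\left|k_{l}-E_{\omega}(k_{l})\right|^{2}\right)+\nu\left(E_{\omega}(|k_{l}|^{2})-|E_{\omega}(k_{l})|^{2}\right)=0$, a sum of two nonnegative terms, whereas you derive them sequentially from the vector identity $\pi_{\mu}^{\omega}(k_{l})\Omega=\pi_{\nu}^{\omega}(k_{l})\Omega$ and then Kadison--Schwarz.
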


\begin{proof}
Note that due to (\ref{normVsE}) and (\ref{Ialt})
\begin{align*}
&  \nu\left(  \left|  k_{l}-E_{\omega}(k_{l})\right|  ^{2}\right)  +\nu\left(
E_{\omega}(|k_{l}|^{2})-|E_{\omega}(k_{l})|^{2}\right) \\
&  =\mu(k_{l}^{\ast}k_{l})+\nu(k_{l}^{\ast}k_{l})-\nu(E_{\omega}(k_{l})^{\ast
}k_{l})-\nu(k_{l}^{\ast}E_{\omega}(k_{l}))\\
&  =0
\end{align*}
where $E_{\omega}(|k_{l}|^{2})-|E_{\omega}(k_{l})|^{2}\geq0$ by Kadison's
inequality, hence
\[
\nu\left(  \left|  k_{l}-E_{\omega}(k_{l})\right|  ^{2}\right)  =0\text{
\ \ and \ \ }\nu\left(  E_{\omega}(|k_{l}|^{2})-|E_{\omega}(k_{l}
)|^{2}\right)  =0
\]
for $l=1,...,n$. Since $\nu$ is faithful, the former implies%
\begin{equation}
E_{\omega}(k_{l})=k_{l}\label{E&id}
\end{equation}
while the latter implies
\[
E_{\omega}(k_{l}^{\ast}k_{l})=E_{\omega}(k_{l})^{\ast}E_{\omega}(k_{l})
\]
for $l=1,...,n$.

Setting
\[
A_{\omega}:= \left\{ a\in A:E_{\omega}(a^{\ast}a) =E_{\omega}(a)^{\ast
}E_{\omega}(a)\right\} ,
\]
it follows from \cite[Theorem 3.1]{Choi74} that $A_{\omega}$ is a (norm
closed) subalgebra of $A$ and that
\begin{equation}
A_{\omega}=\left\{  a\in A:E_{\omega}(ba)=E_{\omega}(b)E_{\omega}(a)\text{ for
all }b\in A\right\}  .\label{vermAlg}
\end{equation}
Since $\{k_{1}^{\ast},...,k_{n}^{\ast}\}=\{k_{1},...,k_{n}\}$, we have
$k_{1},...,k_{n},k_{1}^{\ast},...,k_{n}^{\ast}\in A_{\omega}$, 
so $A_{\omega}$ contains the $\ast$-algebra $R_{0}$ generated by 
$\{1,k_{1},...,k_{n}\}$.
Moreover, since $E_{\omega}$ is positive and therefore preserves the
involution, we see from (\ref{vermAlg}) that $E_{\omega}|_{R_{0}}$ is a unital
$\ast$-homomorphism, thus $E_{\omega}|_{R_{0}}=\operatorname*{id}_{R_{0}}$
because of (\ref{E&id}). As $E_{\omega}$ is normal, it follows that
$E_{\omega}|_{R}=\operatorname*{id}_{R}$, which implies that $\mu(r)=\nu\circ
E_{\omega}(r)=\nu(r)$ for all $r\in R$.
\end{proof}

The following corollary of this lemma is what we need to complete the proof
that $W_{2}$ is a metric:

\begin{corollary}
\label{mu=nu}If $A$ is generated by $\{k_{1},...,k_{n}\}$ in Lemma \ref{EvsId}
and $I(\omega)=0$ for some $\omega\in T(\mu,\nu)$, then $E_{\omega
}=\operatorname*{id}_{A}$, $\omega=\delta_{\mu}$ and $\mu=\nu$.
\end{corollary}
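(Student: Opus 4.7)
The plan is to derive everything directly from Lemma \ref{EvsId} together with the Choi--Jamiolkowski-type correspondence in (\ref{E}). First I would invoke Lemma \ref{EvsId}: by the corollary's hypothesis, $A$ is generated as a von Neumann algebra by $\{k_1, \ldots, k_n\}$, so the subalgebra $R$ appearing in that lemma is all of $A$. Its conclusion $E_\omega|_R = \operatorname{id}_R$ therefore upgrades to $E_\omega = \operatorname{id}_A$. Combining this with the identity $\nu \circ E_\omega = \mu$ (one of the standing properties of $E_\omega$ established in Section \ref{AfdOordPlan}) immediately yields $\mu = \nu$.

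It remains to identify $\omega$ with $\delta_\mu$. For this I would appeal to the one-to-one correspondence $\omega \leftrightarrow E_\omega$ between transport plans in $T(\mu,\nu)$ and u.c.p.\ maps $E:A\to B$ with $\nu\circ E = \mu$, recorded in Section \ref{AfdOordPlan}. Reading (\ref{E}) with $\omega = \delta_\mu$ (and using $\delta_\mu(a\otimes b') = \langle \Lambda_\mu, ab'\Lambda_\mu\rangle$) shows that $E_{\delta_\mu} = \operatorname{id}_A$. Since $E_\omega = \operatorname{id}_A = E_{\delta_\mu}$, the injectivity of $\omega \mapsto E_\omega$ forces $\omega = \delta_\mu$.

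There is no real obstacle here: the substance of the argument has already been done in Lemma \ref{EvsId}, and this corollary is essentially a bookkeeping step, converting the restricted statements $E_\omega|_R = \operatorname{id}_R$ and $\mu|_R = \nu|_R$ into full-algebra statements via the hypothesis $R=A$ and the uniqueness built into the transport-plan/u.c.p.-map bijection. The only minor point worth flagging is that the hypothesis $\{k_1^\ast,\ldots,k_n^\ast\} = \{k_1,\ldots,k_n\}$ needed to apply Lemma \ref{EvsId} is implicit here (it is inherited from that lemma's setting of ``generated by $\{k_1,\ldots,k_n\}$'').
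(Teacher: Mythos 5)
Your proposal is correct and follows essentially the same route as the paper: apply Lemma \ref{EvsId} with $R=A$ to get $E_{\omega}=\operatorname{id}_{A}$ and $\mu=\nu$, then recover $\omega=\delta_{\mu}$ from the transport-plan/u.c.p.-map correspondence (the paper writes this as $\omega=\delta_{\nu}\circ(E_{\omega}\odot\mathrm{id}_{A'})=\delta_{\mu}$, which is your injectivity argument made explicit). Your remark that the hypothesis $\{k_{1}^{\ast},\ldots,k_{n}^{\ast}\}=\{k_{1},\ldots,k_{n}\}$ is inherited from the lemma's setting is also accurate.
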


\begin{proof}
This follows from Lemma \ref{EvsId}, since $R=A$, and $\omega=\delta_{\nu
}\circ\left(  E_{\omega}\odot\text{id}_{A^{\prime}}\right)  =\delta_{\mu}$.
\end{proof}

In particular, this tells us that $\delta_{\mu}$ is the unique transport plan
from $\mu$ to itself attaining zero transport cost, but only the $\mu=\nu$
result is needed next.

Finally, we reach this paper's main result:

\begin{theorem}
\label{hoofSt}Let $A$ be a $\sigma$-finite von Neumann algebra. Assume that
$A$ is generated by $k_{1},...,k_{n}\in A$ such that $\{k_{1}^{\ast}
,...,k_{n}^{\ast}\}=\{k_{1},...,k_{n}\}$. Let $W_{2}$ be the Wasserstein
distance on $\mathfrak{F}(A)$ associated to $k_{1},...,k_{n}$ in Definition
\ref{W2}. Then $W_{2}$ is a metric.
\end{theorem}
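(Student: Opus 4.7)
The plan is simply to assemble the five defining properties of a metric for $W_2$ on $\mathfrak{F}(A)$, drawing on the results already established earlier in the paper. Non-negativity is immediate from Definition \ref{W2}, since $W_2(\mu,\nu)$ is an infimum of square roots of the manifestly non-negative quantities $I(\omega)$. The identity $W_2(\mu,\mu)=0$ is exactly Proposition \ref{W2(mu,mu)=0}, symmetry $W_2(\mu,\nu)=W_2(\nu,\mu)$ is Proposition \ref{sim}, and the triangle inequality is Proposition \ref{driehoek}. None of these four facts used the hypothesis that $A$ is finitely generated.

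What remains, and what the finite generation hypothesis is precisely designed to handle, is the implication $W_2(\mu,\nu)=0 \Rightarrow \mu=\nu$. My plan here is a two-step argument. First, I invoke Lemma \ref{minOP}, which shows that the infimum defining $W_2(\mu,\nu)$ is always attained: there exists $\omega\in T_{\sigma}(\mu,\nu)$ with $I(\omega)=W_2(\mu,\nu)^2=0$. Second, since $\omega\in T_\sigma(\mu,\nu)\subseteq T(\mu,\nu)$ and we are assuming that $A$ is generated by $k_1,\ldots,k_n$ with $\{k_1^*,\ldots,k_n^*\}=\{k_1,\ldots,k_n\}$, Corollary \ref{mu=nu} applies and yields $\mu=\nu$ (in fact the stronger conclusion $E_\omega=\operatorname{id}_A$ and $\omega=\delta_\mu$, though only the equality of states is needed here).

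The existence of a minimizer in Lemma \ref{minOP} is the only non-formal ingredient; it is what makes the jump from ``infimum equals zero'' to ``some transport plan has zero cost'' rigorous, so that Lemma \ref{EvsId} and Corollary \ref{mu=nu} become applicable. Since all the substantive analytical content of these properties is already proved in the lemmas and propositions cited, the theorem follows by citing them in sequence, with essentially no further obstacle. I would therefore expect the proof to be a brief paragraph stating each of the four metric axioms and pointing to the corresponding result, with the separation axiom spelled out as the combination of Lemma \ref{minOP} and Corollary \ref{mu=nu}.
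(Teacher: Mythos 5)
Your proposal matches the paper's proof exactly: the author likewise assembles non-negativity from the definition, the triangle inequality and symmetry from Propositions \ref{driehoek} and \ref{sim}, $W_{2}(\mu,\mu)=0$ from Proposition \ref{W2(mu,mu)=0}, and obtains $W_{2}(\mu,\nu)=0\Rightarrow\mu=\nu$ by combining Lemma \ref{minOP} (attainment of the infimum) with Corollary \ref{mu=nu}. No gaps; your identification of the minimizer's existence as the one non-formal ingredient is also how the paper structures the argument.
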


\begin{proof}
By its definition, $W_{2}$ is real-valued and never negative. According to
Propositions \ref{driehoek} and \ref{sim} we know that $W_{2}$ satisfies the
triangle inequality and is symmetric. Proposition \ref{W2(mu,mu)=0} tells us
that $W_{2}(\mu,\mu)=0$. If $W_{2}(\mu,\nu)=0$, it follows from Lemma
\ref{minOP} followed by Corollary \ref{mu=nu} that $\mu=\nu$.
\end{proof}

If $A$ is not generated by $\{k_{1},...,k_{n}\}$ in Lemma \ref{EvsId}, then
$\mu\neq\nu$ is possible, as we now show, in which case $W_{2}$ is then only a pseudometric.

Suppose we have a von Neumann subalgebra $F$ of $A$ such that $\sigma_{t}
^{\mu}(F)=F$ for all $t$ and $R\subset F$. Then, by \cite{T72}, there is a
unique conditional expectation $E$ from $A$ onto $F$ such that $\mu\circ
E=\mu$. Assuming that
\[
\mu|_{F}=\nu|_{F},
\]
this gives a transport plan $\omega=\delta_{\nu}\circ(E\odot$id$_{A^{\prime}
})$ from $\mu$ to $\nu$. (This $\omega$ is an example of a relatively
independent coupling, which comes up in the theory of noncommutative joinings;
see \cite[Section 3]{D3}.) Then $I(\omega)=0$ by (\ref{Ialt}), since
$E_{\omega}=E$ restricted to $F$ is the identity map. In particular $W_{2}
(\mu,\nu)=0$.

In this setup there are cases with $\mu\neq\nu$, for example: Consider
$A=M\bar{\otimes}N$ for von Neumann algebras $M$ and $N$, and set $\mu
=\lambda\bar{\otimes}\zeta$ and $\nu=\lambda\bar{\otimes}\eta$ for faithful
normal states $\lambda$ on $M$, and $\zeta$ and $\eta$ on $N$. Taking
$F=M\otimes1$ and $k_{1},...,k_{n}\in F$, we satisfy all the requirements of
the previous paragraph, but $\mu\neq\nu$ if $\zeta\neq\eta$.

\section{Expanding the setting}

To conclude the paper, we briefly outline two ways of expanding the setting
above, to obtain somewhat weaker results, which emphasize the role our
assumptions play in proving that $W_{2}$ is a metric, and as an indication of
possible further avenues to explore. This is followed by questions regarding
further generalization.

First, we can allow all transport plans in Definition \ref{W2} of the
distance. Given $k_{1},...,k_{n}\in A$, we define an associated distance
function $d$\ on $\mathfrak{F}(A)$ by
\[
d(\mu,\nu):=\inf_{\omega\in T(\mu,\nu)}I(\omega)^{1/2}
\]
in terms of Definition \ref{I}.

With minor modifications to the foregoing work, one then obtains the following
variation on our main result:
%Skrif bl. 138 en 100-.

\begin{proposition}
Let $A$ be a $\sigma$-finite von Neumann algebra. Assume that $A$ is generated
by $k_{1},...,k_{n}\in A$ such that $\{k_{1}^{\ast},...,k_{n}^{\ast}
\}=\{k_{1},...,k_{n}\}$. Let $d$ be the distance function above associated to
$k_{1},...,k_{n}$. Then $d$ is an asymmetric metric, that is to say, $d$
satisfies all the requirements of a metric, except that it may not be symmetric.
\end{proposition}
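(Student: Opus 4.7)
The plan is to revisit each ingredient of Theorem \ref{hoofSt} after replacing $T_\sigma(\mu,\nu)$ with $T(\mu,\nu)$ throughout, and to verify that only the symmetry step genuinely breaks down. Nonnegativity and real-valuedness are immediate from the definition, while $d(\mu,\mu)=0$ is witnessed by $\omega = \delta_\mu$, exactly as in Proposition \ref{W2(mu,mu)=0}, since $\delta_\mu \in T(\mu,\mu)$.

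For the triangle inequality I would essentially re-run Proposition \ref{driehoek}, given $\omega \in T(\mu,\nu)$ and $\psi \in T(\nu,\xi)$, by setting $\varphi = \omega\circ\psi \in T(\mu,\xi)$. Modularity was used in that proof only to conclude $\varphi \in T_\sigma(\mu,\xi)$, which is no longer required. The chain of estimates using Lemmas \ref{inbed} and \ref{inbed2}, the relative tensor product $H_\omega \otimes_\nu H_\psi$, and the triangle inequality in $\bigoplus_{l=1}^n H_\varphi$ is entirely independent of the modular condition. Taking the infimum over $T(\mu,\xi)$ on the left and over $T(\mu,\nu) \times T(\nu,\xi)$ on the right yields $d(\mu,\xi) \le d(\mu,\nu) + d(\nu,\xi)$.

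For the identity of indiscernibles, I would first adapt Lemma \ref{minOP} by replacing $T_\sigma(\mu,\nu)$ with $T(\mu,\nu)$. The compactness argument is unaffected: $T(\mu,\nu)$ is still a weak* closed, hence weak* compact, subset of the state space of $A \otimes_{\mathrm{max}} B'$, and the approximation estimate relating $I(\omega_q)$ and $I(\omega)$ through elements $a_l' \in A'$ with $k_l \Lambda_\nu \approx a_l'^{\ast}\Lambda_\nu$ makes no use of modularity. Consequently, if $d(\mu,\nu) = 0$ there exists some $\omega \in T(\mu,\nu)$ with $I(\omega) = 0$, and Corollary \ref{mu=nu}, which is already formulated for arbitrary transport plans and requires only that $A$ be generated by $k_1,\dots,k_n$ with $\{k_l^{\ast}\} = \{k_l\}$, then gives $\mu = \nu$.

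The one property whose proof cannot be rescued is symmetry: Proposition \ref{sim} rested on passing from $\omega$ to its KMS-dual $\omega^\sigma$ via the intertwining $\Delta_\nu^{it} K = K \Delta_\mu^{it}$, which is precisely the modular condition that we have now dropped. Thus $d$ verifies all axioms of a metric except symmetry, i.e.\ it is an asymmetric metric in the sense stated. The only place where I expect any mild subtlety is checking that the compactness/approximation step in the adapted Lemma \ref{minOP} really does transfer without change to $T(\mu,\nu)$, but since the original argument uses weak* compactness of a set of states on the maximal C*-tensor product and never invokes the modular intertwining, this is a routine verification rather than a genuine obstacle.
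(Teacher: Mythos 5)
Your proposal is correct and follows exactly the route the paper intends: the paper gives no explicit proof, stating only that the result follows ``with minor modifications to the foregoing work,'' and your verification that modularity enters only through (i) placing $\varphi=\omega\circ\psi$ in $T_\sigma(\mu,\xi)$ and (ii) the KMS-dual/intertwining argument for symmetry, while Lemma \ref{minOP} and Corollary \ref{mu=nu} (the latter already stated for arbitrary $\omega\in T(\mu,\nu)$) carry over unchanged, is precisely the intended content of those modifications. No gaps.
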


That $d$ may not be symmetric, could be very natural, as we have a direction
of transport involved, which now appears to be reflected in the distance function.

On the other hand, we have not shown that the modular property of transport
plans is the weakest condition ensuring symmetry, nor have we explicitly shown
$d$ not to be symmetric. The modular condition is very natural, but the
possibility of symmetry more generally is an open question.

The second variation is to drop the assumption that the states we work with
are faithful. That is to say, we define a distance function on the set
$\mathfrak{S}(M)$ of all normal states on a von Neumann algebra $M$. To do
this, let $p_{\zeta}$ be the support projection of $\zeta\in\mathfrak{S}(M)$.
Restrict to the von Neumann algebra $A=p_{\zeta}Mp_{\zeta}$ and replace
$\zeta$ by its restriction $\mu$ to $A$, which is a faithful normal state.
Similarly for $\eta\in\mathfrak{S}(M)$ to obtain the von Neumann algebra $B$
and faithful normal state $\nu$.

For $k_{1},...,k_{n}\in M$, we consider
\[
k_{l}^{\zeta}:=p_{\zeta}k_{l}p_{\zeta}\in A
\]
and write $k^{\zeta}=(k_{1}^{\zeta},...,k_{n}^{\zeta})$. Similarly for $\eta$.
Then apply the representation machinery from Section \ref{AfdOordPlan} to
$(A,\mu)$ and $(B,\nu)$, strictly speaking after setting up a cyclic
representation for both, to define
\[
\mathcal{I}(\omega)^{1/2}=\left\|  \pi_{\mu}^{\omega}(k^{\zeta})\Omega
-\pi_{\nu}^{\omega}(k^{\eta})\Omega\right\|  _{\oplus\omega}
\]
for $\omega\in T_{\sigma}(\mu,\nu)$, in place of Definition \ref{I}.

Now we define a distance function $\rho$\ on $\mathfrak{S}(M)$ associated to
$k_{1},...,k_{n}$, by
\[
\rho(\zeta,\eta)=\inf_{\omega\in T_{\sigma}(\mu,\nu)}\mathcal{I}(\omega)^{1/2}
\]
which leads to the next result, again by minor modifications to our previous
work:
%Skrif bl. 138.

\begin{proposition}
Let $M$ be a von Neumann algebra and consider any $k_{1},...,k_{n}\in M$. Let
$\rho$ be the distance function above associated to $k_{1},...,k_{n}$. Then
$\rho$ is an pseudometric on $\mathfrak{S}(M)$, that is to say, $\rho$
satisfies all the requirements of a metric, except that we may have
$\rho(\zeta,\eta)=0$ with $\zeta\neq\eta$.
\end{proposition}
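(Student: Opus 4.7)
The plan is to re-run the proofs of Propositions \ref{driehoek}, \ref{sim} and \ref{W2(mu,mu)=0} with the $k_l^{\zeta}$ and $k_l^{\eta}$ (and, for the triangle step, $k_l^{\xi}$) playing the roles previously played by a common $k_l$, after replacing the ambient von Neumann algebra on each side by the appropriate corner. Non-negativity and real-valuedness of $\rho$ are immediate since $\mathcal{I}(\omega)$ is a squared Hilbert-space norm. For $\rho(\zeta,\zeta)=0$, take the trivial modular transport plan $\omega=\delta_{\mu}\in T_{\sigma}(\mu,\mu)$, so that $E_{\omega}=\operatorname{id}_{A}$; formula (\ref{Ialt}) applied with $k^{\zeta}$ on both sides vanishes.

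For symmetry, fix $\omega\in T_{\sigma}(\mu,\nu)$ and form its KMS-dual $\omega^{\sigma}\in T_{\sigma}(\nu,\mu)$. Applying Lemma \ref{algSim} termwise with $a=k_l^{\zeta}\in A$ and $b=k_l^{\eta}\in B$ and summing over $l$ gives $\mathcal{I}(\omega)=\mathcal{I}(\omega^{\sigma})$. Since $(\omega^{\sigma})^{\sigma}=\omega$, the assignment $\omega\mapsto\omega^{\sigma}$ is an involutive bijection between $T_{\sigma}(\mu,\nu)$ and $T_{\sigma}(\nu,\mu)$, exactly as in the proof of Proposition \ref{sim}, and the two infima defining $\rho(\zeta,\eta)$ and $\rho(\eta,\zeta)$ therefore agree.

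For the triangle inequality, consider a third state $\xi\in\mathfrak{S}(M)$, pass to the corner $C=p_{\xi}Mp_{\xi}$ with its faithful normal restriction, and form the composition $\varphi=\omega\circ\psi$ of $\omega\in T_{\sigma}(\mu,\nu)$ and $\psi\in T_{\sigma}(\nu,\xi)$ exactly as in the proof of Proposition \ref{driehoek}. Lemmas \ref{inbed} and \ref{inbed2} apply componentwise with $a$ replaced by $k_l^{\zeta}$, $b$ by $k_l^{\eta}\in B$, and $c$ by $k_l^{\xi}$, so the triangle inequality in $\bigoplus_{l}H_{\varphi}$ yields
\begin{equation*}
\mathcal{I}(\varphi)^{1/2}\leq\mathcal{I}(\omega)^{1/2}+\mathcal{I}(\psi)^{1/2},
\end{equation*}
with the middle summand vanishing precisely by Lemma \ref{inbed} applied to $k^{\eta}$. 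Taking successive infima delivers $\rho(\zeta,\xi)\leq\rho(\zeta,\eta)+\rho(\eta,\xi)$.

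The only point demanding care is bookkeeping: Section \ref{AfdOordPlan} and Lemmas \ref{inbed}, \ref{inbed2}, \ref{algSim} are already formulated for three possibly distinct algebras $A,B,C$ with states $\mu,\nu,\xi$, and they separate the roles of the two sides, so substituting $k^{\zeta},k^{\eta},k^{\xi}$ for a common $k$ in the vector arguments is harmless. What genuinely cannot be recovered is the implication $\rho(\zeta,\eta)=0\Rightarrow\zeta=\eta$: the proof of Corollary \ref{mu=nu} required $A$ to be generated by $\{k_{1},\ldots,k_{n}\}$, whereas the compressions $k_l^{\zeta}=p_{\zeta}k_{l}p_{\zeta}$ generally fail to generate $p_{\zeta}Mp_{\zeta}$ and carry no information off the support projections. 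An adaptation of the tensor-product counterexample at the end of Section \ref{metriek} therefore still produces pairs $\zeta\neq\eta$ with $\rho(\zeta,\eta)=0$, confirming that $\rho$ is in general only a pseudometric.
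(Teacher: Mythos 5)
Your proposal is correct and follows exactly the route the paper intends: the paper gives no explicit proof, stating only that the result follows ``by minor modifications to our previous work,'' and your argument is precisely that modification, exploiting the fact that (\ref{normVsE}) and Lemmas \ref{inbed}, \ref{inbed2} and \ref{algSim} are already stated for distinct algebras $A$, $B$, $C$ and for independent elements $a$, $b$, $c$, so that $k^{\zeta}$, $k^{\eta}$, $k^{\xi}$ can be substituted on the respective sides. Your observation that only the definiteness step (Corollary \ref{mu=nu}) breaks down is also exactly why the conclusion weakens to a pseudometric.
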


There are some natural further questions:

We focussed on a finite set $\{k_{1},...,k_{n}\}$ of bounded operators to
define cost, in analogy to the coordinate functions on a bounded closed set in
$\mathbb{R}^{n}$. Can one expand on this and use appropriate infinite sets of
$k_{l}$ as well? Or can the $k_{l}$ be unbounded, but affiliated to the von
Neumann algebra in question?

Lastly, what would be the best way of adapting the approach of this paper to Wasserstein metrics of order other than 2?

\end{document}